\newcommand{\C}{\mathbb C}
\newcommand{\X}{\mathcal X}
\newcommand{\M}{\mathcal M}
\newcommand{\ZZ}{\mathbb Z}
\renewcommand{\phi}{\varphi}
\newcommand{\SW}{\operatorname{SW}\,}
\newcommand{\rk}{\operatorname{rank}}
\newcommand{\CP}{{\mathbb C}{\rm P}}
\newcommand{\sign}{\operatorname{sign}}
\newcommand{\PP}{\mathbb{P}}
\newcommand{\Dir}{\,\operatorname{Dir}}
\newcommand{\Sign}{\,\operatorname{Sign}}
\newcommand{\Spec}{\,\operatorname{Spec}}
\newcommand{\spinc}{\ifmmode{\operatorname{Spin}^c}\else{$\operatorname{spin}^c$\ }\fi}
\newcommand{\lt}{\left(\hspace{-0.07in}\left(}
\newcommand{\rt}{\right)\hspace{-0.07in}\right)}
\newcommand{\mmod}{\hspace{-0.08in}\mod}
\newtheorem{theorem}{Theorem}[section]
\newtheorem{lemma}[theorem]{Lemma}
\newtheorem{proposition}[theorem]{Proposition}
\newtheorem{corollary}[theorem]{Corollary}
\theoremstyle{definition}
\newtheorem{remark}[theorem]{Remark}
\title{Instanton Floer homology and Milnor fibers}
\thanks{The first author was partially supported by the Simons Investigator Award-HMS (PI Ludmil Katzarkov) and the University of Miami. The third author was partially supported by the NSF Grant DMS-1952762}
\author[Kyoung-Seog Lee]{Kyoung-Seog Lee}
\address{Department of Mathematics \newline\indent 
POSTECH \newline\indent 77 Cheongam-ro, Nam-gu, Pohang, Gyeongbuk, 37673, Korea}
\email{\rm{kyoungseog@postech.ac.kr}}
\author[Anatoly Libgober]{Anatoly Libgober}
\address{Department of Mathematics \newline\indent 
University of Illinois, Chicago \newline\indent Chicago, IL}
\email{\rm{libgober@uic.edu}}
\author[Nikolai Saveliev]{Nikolai Saveliev}
\address{Department of Mathematics\newline\indent
University of Miami \newline\indent PO Box 249085
\newline\indent Coral Gables, FL 33124}
\email{\rm{saveliev@math.miami.edu}}
\subjclass{57R58; 14J17, 32S25, 32S50, 32S55, 57K41}
\begin{document}
\begin{abstract}{In the early days of the Floer theory, Atiyah asked if there is a Milnor fiber description of the Floer homology of the links of singularities. We answer this question for the Brieskorn–Hamm complete intersection singularities. The resulting combinatorial formulas lead to an independent proof of the equality of the Casson invariants in the Donaldson and Seiberg--Witten theories. We use similar techniques to express the Heegaard Floer $d$-invariant of torus knots in terms of their Milnor fibers.}
\end{abstract}

\maketitle

The following question, which appears in the paper of Neumann and Wahl \cite[Question 3.5]{Neumann-Wahl}, is attributed to Michael Atiyah:

\bigskip

\begin{center}

\begin{minipage}{33em}
\emph{Is there a Milnor fiber description of the Floer homology of the link? For $\Sigma(p, q, r)$, is it related to the action of complex conjugation on the homology of the Milnor fiber of $x^p +y^q + z^r = 1$?}
\end{minipage}

\end{center}

\bigskip
The Brieskorn homology spheres $\Sigma(p,q,r)$ referred to in this question are the links of singularity at zero of the complex polynomial $x^p + y^q + z^r$, for all pairwise relatively prime integers $p$, $q$ and $r$ greater than or equal to 2. By the Floer homology one means the instanton Floer homology of integral homology spheres as originally defined by Floer \cite{Floer}. 

\thispagestyle{empty}

The same question can be asked more generally for Seifert fibered homology spheres $\Sigma (a_1,\ldots, a_n)$ which are the links of the Brieskorn--Hamm complete intersection singularities 
\begin{equation}\label{E:hamm}
\{\,c_{i1} z_1^{a_1} + \ldots + c_{in} z_n^{a_n} = 0,\; i=1,\ldots,n-2\,\}\, \subset\, \mathbb C^n,
\end{equation}
where $a_1,\ldots, a_n$ are pairwise co-prime integers greater than or equal to 2 and $(c_{ij})$ is any real matrix of size $(n-2) \times n$ each of whose maximal minors is non-zero; see Hamm \cite{Hamm}. The case of $n = 3$ corresponds to the Brieskorn homology spheres $\Sigma(p,q,r)$.

In the late 1990s, the third named author \cite{Saveliev} came up with a closed form formula for the Floer homology of $\Sigma(a_1,\ldots,a_n)$, which could be viewed as an answer to Atiyah's question for the Brieskorn--Hamm singularities. The formula, which will be recalled in Section \ref{S:classical}, involves the Milnor fiber of the singularity and the complex conjugation on its link. In this paper, we wish to tie that formula more closely with the algebraic geometry of the Milnor fiber and with the progress in gauge theory of the last quarter century. We obtain two more formulas, one in terms of the compactification of the Milnor fiber in a weighted projective space in Section \ref{S:milnor} (Theorem \ref{T:euler} and Theorem \ref{T:bb}), and the other in terms of the spectrum of singularity in Section \ref{S:spectrum} (Theorem \ref{T:spectrum}). Section \ref{S:sw} (Theorem \ref{T:moy}) relates our formulas to Seiberg--Witten monopoles on $\Sigma(a_1,\ldots, a_n)$, which leads to an independent proof of the Witten--style conjecture, proposed by Mrowka, Ruberman and Saveliev \cite{MRS}, in the special case of Brieskorn--Hamm singularities. A generalization of these results to other links of singularities is discussed in Section \ref{S:comments}. A new formula for the $\bar\mu$--invariant of Neumann and Siebenmann is derived in Section \ref{S:mubar} as part of the proof of Theorem \ref{T:bb}. 

In Section \ref{S:knots}, we derive several formulas for the $d$--invariant of torus knots in terms of Milnor fibers, theta characteristics, and the Steenbrink Jacobian. While the $d$--invariant arises in the Heegaard Floer and not instanton homology, we included these results here because of their thematic similarity to the other results of the paper.

\medskip\noindent
\textbf{Acknowledgments:}\; Part of this work was done while the first author was an IMSA Research Assistant Professor at the University of Miami. He and the second named author thank Ludmil Katzarkov and Josef Svoboda for helpful discussions about related topics.

%%%%%%%%%%%%%%%%%%%%%%%%%%%%%%%%%%%%%%%%%%%%%%%%%%%%%%

\section{Classical results}\label{S:classical}
We begin by reviewing the relevant results from the 1990s. The instanton homology groups $I_* (\Sigma (a_1,\ldots, a_n))$ are free abelian groups whose instanton Betti numbers 
\[
b_k : = \, \rk I_k (\Sigma(a_1,\ldots,a_n)),\quad k = 0, 1,\ldots, 7,
\]
have the property that $b_k = 0$ for even $k$. This was established by Fintushel and Stern\footnote{Fintushel and Stern work with self-dual rather than anti-self-dual equations hence their Floer grading matches ours after switching from $n$ to $-3-n$.} \cite{FS} for $n = 3, 4$ and proved for all $n$ by Kirk and Klassen \cite{KK}. Fintushel and Stern also produced an algorithm for computing $b_k$ for each individual $\Sigma (a_1,\ldots, a_n)$ with $n = 3, 4$. Neither this algorithm nor its extensions to arbitrary $n$ lead to a closed form formula for the Betti numbers $b_k$, hence we are taking a different approach.

Taubes \cite{Taubes} proved that the Euler characteristic of the instanton Floer homology of an arbitrary integral homology sphere $\Sigma$ equals twice the Casson invariant $\lambda(\Sigma)$. In particular, it follows that
\begin{equation}\label{E:casson}
-b_1 - b_3 - b_5 - b_7\; =\; 2\, \lambda(\Sigma(a_1,\ldots, a_n)).
\end{equation}
Saveliev \cite{Saveliev, Saveliev2} computed the Lefschetz number of the map induced on the instanton homology of $\Sigma(a_1,\ldots, a_n)$ by the complex conjugation involution. He obtained the formula
\begin{equation}\label{E:lefschetz}
-b_1 + b_3 - b_5 + b_7\; =\; 2\, \bar\mu(\Sigma(a_1,\ldots, a_n)),
\end{equation}
where $\bar\mu$ stands for the Neumann--Siebenmann invariant; see Neumann \cite{Neumann} and Siebenmann \cite{Siebenmann}. Recall that, for any plumbed homology sphere $\Sigma$, such as $\Sigma(a_1,\ldots, a_n)$,
\[
\bar\mu(\Sigma) = \frac 1 8\, (\,\sign P - w\cdot w),
\]
where $P$ is any plumbed manifold with oriented boundary $\partial P = \Sigma$ and $w$ is its integral Wu class, defined as the unique homology class in $H_2 (P)$ satisfying the following two conditions. First, $w$ is characteristic, that is (dot represents intersection number) $w\cdot x = x\cdot x \pmod 2$ for all $x\in H_2 (P)$, and second, all the coordinates of $w$ are either 0 or 1 in the natural basis of $H_2 (P)$ represented by embedded 2-spheres. 

The formulas \eqref{E:casson} and \eqref{E:lefschetz} together lead to 
\begin{align}
b_1 + b_5\; &= -\bar\mu (\Sigma(a_1,\ldots, a_n)) -  \lambda(\Sigma(a_1,\ldots, a_n)), \label{E:s1}
\\
b_3 + b_7\; &=\; \;\; \bar\mu (\Sigma(a_1,\ldots, a_n)) -  \lambda(\Sigma(a_1,\ldots, a_n)). \label{E:s2}
\end{align}
To finish the calculation of $I_*(\Sigma(a_1,\ldots, a_n))$, one uses the 4-periodicity in instanton homology proved by Fr{\o}yshov \cite{Froyshov},
\[
b_1 = b_5\quad\text{and}\quad b_3 = b_7.
\]

%%%%%%%%%%%%%%%%%%%%%%%%%%%%%%%%%%%%%%%%%%%%%%%%%%%%%%

\section{Compactification of the Milnor fiber}\label{S:milnor}
Let us consider $n-2$ polynomials $P_i(z) = c_{i1} z_1^{a_1} + \ldots + c_{in} z_n^{a_n}$, where $c_{ij}$ are as in formula \eqref{E:hamm}, and the map $f: \mathbb C^n \to \mathbb C^{n-2}$ given by 
\[
f(z)\, = ( P_1(z), \cdots, P_{n-2}(z)). 
\]
The Milnor fiber $M(a_1,\ldots, a_n)$ of the singularity $f^{-1} (0)$ is the intersection of any fiber $f^{-1} (\delta)$ that is non-singular with the closed unit ball in $\mathbb C^n$. It is a simply-connected smooth compact spin 4-manifold with boundary $\Sigma(a_1,\ldots, a_n)$. 

To relate the above formulas for the instanton homology of $\Sigma(a_1,\ldots, a_n)$ to the Milnor fiber $M(a_1,\ldots, a_n)$, we use the formula 
\begin{equation}\label{E:fs}
\lambda(\Sigma(a_1,\ldots, a_n))\;=\;\frac 1 8\,\sign M(a_1,\ldots, a_n),
\end{equation}
which was proved for $n = 3$ by Fintushel and Stern \cite{FS} and for all $n \ge 4$ by Neumann and Wahl \cite{Neumann-Wahl}. Both of these proofs are combinatorial but there is also a geometric proof by Collin and Saveliev \cite{CS1, CS2}. Formula \eqref{E:fs} expresses, roughly speaking, half of the information contained in $I_*(\Sigma(a_1,\ldots, a_n))$ in terms of the Milnor fiber. The other half can be recovered, for instance, by expressing the quantity $b_3 + b_7$ in terms of the Milnor fiber. One way to accomplish this is as follows. 

Let us compactify the Milnor fiber $M = M(a_1,\ldots,a_n)$ in the weighted projective space $\PP = \CP^n (q_1: \ldots : q_n : 1)$, where $q_i = a/a_i$ and $a = a_1\cdots a_n$. We obtain a singular algebraic surface $X$ with cyclic quotient singularities. Its minimal resolution $Y \to X$ is a smooth algebraic surface of the form $Y =  M \cup P$, where $P$ is a plumbed manifold with the boundary $-\Sigma (a_1,\ldots, a_n)$. In fact, for $\Sigma(2,3,7)$, the manifold $Y$ is the Kummer K3--surface. For $\Sigma (2,3,6k \pm 1)$, it is an elliptic surface. Otherwise, $Y$ is a surface of general type.

The quantity $b_3 + b_7$ can be expressed in terms of the holomorphic Euler characteristic of $Y$ as follows. Let $K_Y$ be the canonical divisor of $Y$ and extend the integral Wu class $w \in H_2 (P)$ by zero to a homology class on $Y$; call it again $w$. Since $K_Y$ and $w$ are both characteristic, the classes $K_Y \pm w$ are divisible by two. We will use the notation
\[
K_Y + w\, =\, 2\left\lceil {K_Y}/2 \right\rceil\quad\text{and}\quad K_Y - w\, =\, 2\left\lfloor {K_Y}/2 \right\rfloor.
\]

\begin{theorem}\label{T:euler} 
Let $K_Y$ be the canonical divisor of $Y$. Then
\[
b_3 \, +\, b_7\; = \; \chi \left(Y, \mathcal O\left(\left\lceil {K_Y}/2 \right\rceil\right) \right)\; =\; \chi \left(Y, \mathcal O \left(\left\lfloor {K_Y}/2 \right\rfloor\right) \right).
\]
\end{theorem}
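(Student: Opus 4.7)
The plan is to compute $\chi(Y, \mathcal O(\lceil K_Y/2\rceil))$ via Riemann--Roch on the smooth projective surface $Y$, reduce it to a purely topological expression using Noether's formula and the Hirzebruch signature theorem, and then match the result to $b_3 + b_7$ using the identities assembled in Section~\ref{S:classical}. Setting $D_+ = (K_Y+w)/2$ and $D_- = (K_Y-w)/2$, a direct expansion gives
\[
D_+\cdot(D_+ - K_Y) \;=\; \frac{(K_Y+w)(w-K_Y)}{4} \;=\; \frac{w^2 - K_Y^2}{4},
\]
and the identical value arises for $D_-\cdot(D_- - K_Y)$ after the signs cancel. Applying Riemann--Roch $\chi(\mathcal O(D_\pm)) = \chi(\mathcal O_Y) + \tfrac12\,D_\pm\cdot(D_\pm - K_Y)$ then equates the two Euler characteristics in the statement, with common value $\chi(\mathcal O_Y) + (w^2 - K_Y^2)/8$. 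Inserting Noether's formula $12\chi(\mathcal O_Y) = K_Y^2 + \chi_{\mathrm{top}}(Y)$ and the Hirzebruch identity $3\sign Y = K_Y^2 - 2\chi_{\mathrm{top}}(Y)$ collapses the whole combination to
\[
\chi(\mathcal O(\lceil K_Y/2\rceil)) \;=\; \frac{w^2 - \sign Y}{8}.
\]

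The second half is to bring $b_3+b_7$ into the same form. Since $\Sigma(a_1,\ldots,a_n)$ is a homology sphere, Mayer--Vietoris yields $H_2(Y) = H_2(M) \oplus H_2(P)$, and the class $w$ (extended by zero) is characteristic on $Y$: trivially on the $M$-summand because $M$ is spin, and on the $P$-summand by the defining property of the integral Wu class. Novikov additivity gives $\sign Y = \sign M + \sign P$, and \eqref{E:fs} gives $\lambda(\Sigma) = \sign M/8$. The filling $P$ sitting inside $Y$ has $\partial P = -\Sigma$, whereas the $\bar\mu$-formula stated in the introduction applies to a filling with $\partial P = \Sigma$; reversing orientation flips both $\sign P$ and $w\cdot w$ while leaving $w$ characteristic, so in the $Y$-induced orientation
\[
\bar\mu(\Sigma) \;=\; \frac{w\cdot w - \sign P}{8}.
\]
Substituting into \eqref{E:s2} then gives $b_3 + b_7 = \bar\mu - \lambda = (w^2 - \sign P - \sign M)/8 = (w^2 - \sign Y)/8$, matching the Riemann--Roch side.

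I expect the main obstacle to be the bookkeeping around orientation conventions and the extension of $w$ across $Y$. One must verify that the orientation reversal of $P$ flips $\sign P$ and $w\cdot w$ consistently so that $\bar\mu$ takes the form written above; that $w \in H_2(P)$, extended by zero to $H_2(Y)$, really is characteristic for the intersection form of $Y$; and that $K_Y \pm w$ is honestly $2$-divisible in $H^2(Y;\mathbb Z)$, not merely in $H^2(Y;\mathbb Q)$. The last point reduces to $H_1(Y;\mathbb Z)$ being torsion-free, since then the set of characteristic elements forms a single coset of $2H^2(Y;\mathbb Z)$. Once these topological points are settled, the algebraic-geometric half of the argument is entirely formal.
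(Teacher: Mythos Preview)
Your proposal is correct and follows essentially the same route as the paper: apply Riemann--Roch to $D=(K_Y\pm w)/2$, combine Noether's formula with the Hirzebruch signature theorem to get $\chi(\mathcal O(D)) = (w\cdot w - \sign Y)/8$, and identify this with $b_3+b_7$ via \eqref{E:s2}, \eqref{E:fs}, and the definition of $\bar\mu$. The paper's version is slightly terser---it does not spell out the orientation bookkeeping for $P$ or the verification that $w$ extends to a characteristic class on $Y$---so your added remarks on these points are legitimate elaborations rather than a different argument.
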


\begin{proof}
Let us temporarily use notation $K_Y + w = 2D$. It follows from the Hirzebruch--Riemann--Roch theorem that
\[
\chi \left(Y,\mathcal O(D)\right)\, =\, \chi (Y)\, +\, \frac 1 2\, D\cdot (D-K_Y)\, =\, \chi (Y) - \frac 1 8\, (K_Y\cdot K_Y - w\cdot w).
\]
Next, use Noether's formula 
\[
\chi (Y)\, =\, \frac 1 {12}\, \left(\,c_1^2(Y) + c_2(Y)\right)[Y]
\]
and the universal formula $p_1 = c_1^2 -2c_2$ for the first Pontryagin class to deduce that
\[
\chi (Y)\, =\, - \frac 1 {24}\; p_1 (Y) [Y]\, +\, \frac 1 8\; c_1^2 (Y) [Y]\, =\, -\frac 18\,\left(\sign\, (Y) - K_Y\cdot K_Y\right),
\]
with the last equality following from the Hirzebruch signature theorem. This shows that
\[
\chi \left(Y,\mathcal O(D)\right)\, = -\frac 1 8\,(\sign (Y) - w\cdot w) = - \lambda(\Sigma(a_1,\ldots, a_n)) + \bar\mu (\Sigma(a_1,\ldots, a_n)) = b_3 + b_7,
\]
which proves the first formula of the theorem. The second formula is obtained by replacing $w$ with $-w$ in the above argument.
\end{proof}

\begin{remark} It is worth noting that the compactification of a Milnor fiber in a weighted projective space also played a prominent role in the paper of Ebeling and Okonek \cite{EO}. They employed it to compute relative Donaldson polynomials of the Milnor fibers of Brieskorn--Hamm complete intersection singularities by using the TQFT properties of the Donaldson theory. It would be interesting to relate their work to ours. 
\end{remark}

We wish next to express the quantity $b_3 + b_7$ directly in terms of the singular surface $X$, without first resolving its singularities. This will allow us to obtain a formula for $b_3 + b_7$ in terms of the spectrum of singularity in Section \ref{S:spectrum}. 

\begin{theorem}\label{T:bb}
Let $K_X$ be the canonical divisor of $X$. Then 
\[
b_3 + b_7\; =\; \chi \left(X, \mathcal O\left(\left\lceil {K_X}/2 \right\rceil\right) \right)\; = \; \chi \left(X, \mathcal O\left(\left\lfloor {K_X}/2 \right\rfloor\right) \right).
\]
\end{theorem}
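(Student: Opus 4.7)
The plan is to reduce Theorem \ref{T:bb} to Theorem \ref{T:euler} by showing that the holomorphic Euler characteristics on $X$ and on its resolution $\pi\colon Y \to X$ coincide for the sheaves in question. Concretely, I would attempt to prove
\[
\pi_* \mathcal O_Y\!\left(\lceil K_Y/2 \rceil\right)\; \cong\; \mathcal O_X\!\left(\lceil K_X/2 \rceil\right) \quad\text{and}\quad R^1 \pi_* \mathcal O_Y\!\left(\lceil K_Y/2 \rceil\right)\; =\; 0,
\]
so that the Leray spectral sequence forces equality of the two Euler characteristics, and Theorem \ref{T:euler} identifies the right-hand side with $b_3 + b_7$. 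The symmetry between $\lceil K_X/2 \rceil$ and $\lfloor K_X/2 \rfloor$ will then follow by sending $w \mapsto -w$ in the Wu class.

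The identification of the direct image is local at each singular point of $X$. Each such point is a cyclic quotient singularity, hence rational and log-terminal, with $\pi^* K_X = K_Y - \Delta$ for a $\pi$-exceptional $\mathbb Q$-divisor $\Delta$ whose coefficients are determined by the Hirzebruch--Jung continued-fraction data of the singularity. Because the integral Wu class $w$ has coefficients $0$ or $1$ on each exceptional $\mathbb P^1$ in $P$, the divisor $(K_Y + w)/2$ should match the reflexive pullback of $\lceil K_X/2 \rceil$ modulo a $\pi$-exceptional $\mathbb Q$-divisor whose integer round-down vanishes; this turns the desired identification into a purely combinatorial check at each singular fiber. For the vanishing of $R^1\pi_*$ I would appeal to Grauert--Riemenschneider or Kawamata--Viehweg: on each exceptional chain of rational curves of negative self-intersection the sheaf has the form $\omega_Y \otimes L$ with $L$ a $\pi$-nef $\mathbb Q$-divisor, so the higher direct image vanishes; alternatively, an explicit \v{C}ech computation on the dual graph of $P$ suffices because all exceptional components are rational and the graph is a tree.

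The main obstacle is the local bookkeeping: precisely identifying which rank-one reflexive sheaf on $X$ arises as $\pi_* \mathcal O_Y(\lceil K_Y/2 \rceil)$ and confirming it is $\mathcal O_X(\lceil K_X/2 \rceil)$ in the sense intended by the statement. This is exactly where the new formula for the Neumann--Siebenmann invariant $\bar\mu$ promised in Section \ref{S:mubar} should come into play: the corrections arising at the cyclic quotient singularities ought to assemble into the combinatorial expression $(1/8)(\sign P - w\cdot w)$ defining $\bar\mu$, matching the singular-side computation with $-\lambda + \bar\mu = b_3 + b_7$ from \eqref{E:s2}. Concretely, one expects the discrepancy contribution at each singularity to be repackaged as a Dedekind-sum correction whose aggregate coincides with $\bar\mu$, bypassing the need for the resolution altogether and yielding the stated formula directly on $X$.
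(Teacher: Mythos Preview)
Your approach---reducing to Theorem~\ref{T:euler} by pushing forward along the resolution $\pi\colon Y\to X$---is a genuinely different route from the paper's. The paper never compares $Y$ and $X$ sheaf-theoretically. Instead it computes $\chi(X,\mathcal O(\lceil K_X/2\rceil))$ directly on the orbifold $X$ via Kawasaki's orbifold Atiyah--Singer index theorem, quoting the explicit index calculations of Fukumoto--Furuta for the relevant orbifold line bundle. The output is a ``main term'' plus, at each cyclic quotient point, a trigonometric cotangent/cosecant sum. The equality $\chi(\lceil K_X/2\rceil)=\chi(\lfloor K_X/2\rfloor)$ falls out of those formulas immediately. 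The heart of the proof is then the identification of each local trigonometric sum with a Dedekind--Rademacher sum appearing in the known formula for $\bar\mu(\Sigma(a_1,\ldots,a_n))$ of Ruberman--Saveliev/Nicolaescu; this is exactly the ``new formula for $\bar\mu$'' advertised in Section~\ref{S:mubar}. So the Dedekind sums you anticipate do appear, but as the bridge between the Kawasaki index and $\bar\mu$, not as discrepancy corrections in a push-forward comparison.

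Your route is plausible but, as you yourself note, the key step is left open: you do not verify that $\pi_*\mathcal O_Y((K_Y+w)/2)$ is the reflexive sheaf $\mathcal O_X(\lceil K_X/2\rceil)$, nor check the $\pi$-nefness of $(w-K_Y)/2$ required for Kawamata--Viehweg. One further wrinkle: the symmetry $\lceil K_X/2\rceil\leftrightarrow\lfloor K_X/2\rfloor$ on $X$ cannot come from $w\mapsto -w$, since $w$ lives only on $Y$; on $X$ the two sheaves are $\mathcal O_X(\lceil d/2\rceil H)$ and $\mathcal O_X(\lfloor d/2\rfloor H)$ for a single integer degree $d$, and their Euler characteristics agree for a different reason. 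What the paper's approach buys is that all the local bookkeeping you flag as the obstacle is absorbed into Kawasaki's theorem, leaving only a trigonometric identity to check; what your approach would buy, if completed, is a purely algebro-geometric argument that avoids orbifold index theory altogether.
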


The proof of this result, which is based on Kawasaki's version of the Atiyah--Singer index theorem, is rather technical and we postpone it until Section \ref{S:mubar} for the sake of exposition. We hope to present an algebraic geometric proof elsewhere. 

%%%%%%%%%%%%%%%%%%%%%%%%%%%%%%%%%%%%%%%%%%%%%%%%%%%%%%

\section{Instanton homology and integer lattice points}\label{S:spectrum}
In this section, we will identify the holomorphic Euler characteristics appearing in Theorem \ref{T:bb} with a   count of integer lattice points in a simplex. This will lead to an explicit combinatorial formula for $b_3 + b_7$. 

%%%%%%%%%%%%%%%%%%%%%%%%%%%%%%%%%%%%%%%%%%%%%%%%%%%%%

\subsection{The statement} 
Given a complete intersection singularity \eqref{E:hamm}, for any integer $e \ge 0$ denote by $A_e$ the set of $n$--tuples $(x_1,\ldots, x_n)$ of integers such that $0 \le x_i < a_i$ and 
\medskip
\[
e\, +\, \sum_{i = 1}^n \; \frac {x_i}{a_i} \; < \; \frac 1 2\, \left(n - 2\, -\, \sum_{i = 1}^n\; \frac 1 {a_i} \right).
\]

\medskip\noindent
One can easily see that the above inequality is only possible when $e \le \lfloor (n - 3)/2 \rfloor$. 

\begin{theorem}\label{T:spectrum}
Let $\big|A_e \big|$ denote the cardinality of the set $A_e$ and let $s =  \lfloor (n - 3)/2 \rfloor$. Then 
\[
b_3 + b_7\; = \; 2\, \sum_{e = 0}^s\; (e + 1)\, \big| A_e \big|.
\]
\end{theorem}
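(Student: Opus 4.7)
The plan is to apply Theorem~\ref{T:bb}, compute the Hilbert series of the graded coordinate ring $R$ of $X$, and then match the resulting weighted count of monomials with $\sum_e(e+1)|A_e|$.

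By Theorem~\ref{T:bb}, $b_3+b_7=\chi(X,\mathcal O(\lfloor K_X/2\rfloor))$. Since $X\subset\PP=\CP^n(q_1:\cdots:q_n:1)$ is a complete intersection of $n-2$ hypersurfaces of weighted degree $a$, adjunction gives $K_X=\mathcal O_X(d)$ with $d=(n-2)a-\sum q_i-1=a(n-2-\sum 1/a_i)-1$. Using the factorization $(1-t^a)=(1-t^{q_i})\Psi_i(t)$, where $\Psi_i(t):=1+t^{q_i}+\cdots+t^{(a_i-1)q_i}$, I would rewrite the Hilbert series of $R$ in the form
\[
H_R(t)=\frac{\prod_{i=1}^n\Psi_i(t)}{(1-t)(1-t^a)^2}.
\]
Expanding both factors of the denominator as power series, $\dim R_m$ becomes the sum of $(e+1)$ over tuples $(x_1,\ldots,x_n,e)$ with $0\le x_i<a_i$, $e\ge 0$, and $\sum q_i x_i+ae\le m$. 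Taking $m=\lfloor d/2\rfloor$ recovers exactly the defining inequality of $A_e$ (with the cap $e\le s$ coming from non-negativity of the $x_i$), yielding $\dim R_{\lfloor d/2\rfloor}=\sum_{e=0}^s(e+1)|A_e|$.

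What remains is to show that $\chi(X,\mathcal O(\lfloor d/2\rfloor))=2\dim R_{\lfloor d/2\rfloor}$. For this I would run the Koszul resolution of $\mathcal O_X$ on $\PP$, invoke the standard vanishing $H^i(\PP,\mathcal O(\ell))=0$ for $0<i<n$ on weighted projective space and Serre duality on $\PP$, and reindex the resulting alternating sum by $k\mapsto n-2-k$. The outcome should be the compact formula
\[
\chi(X,\mathcal O(m))=\dim R_m+\dim R_{d-m},\qquad m\in\Z,
\]
with the convention $\dim R_\ell=0$ for $\ell<0$. Setting $m=\lfloor d/2\rfloor$ reduces the theorem to the single identity $\dim R_{\lceil d/2\rceil}=\dim R_{\lfloor d/2\rfloor}$.

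The hard part will be this last identity. It holds trivially when $d$ is even, and a brief parity check shows that $d$ is odd precisely when every $a_i$ is odd. In that case I would need to rule out every $(x,e)$ with $0\le x_i<a_i$, $e\ge 0$, and $\sum q_i x_i+ae=(d+1)/2$. Clearing denominators rewrites the equation as $\sum q_i(2x_i+1)=(n-2-2e)\,a$; reducing modulo each $a_i$ and using $\gcd(q_i,a_i)=1$ with $a_i\mid q_j$ for $j\neq i$ forces $2x_i+1\equiv 0\pmod{a_i}$, so $x_i=(a_i-1)/2$ for every $i$. Substituting back gives $na=(n-2-2e)a$, whence $e=-1$, a contradiction. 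Thus no monomial of $R$ lives in degree $(d+1)/2$, the symmetry is established, and the theorem follows.
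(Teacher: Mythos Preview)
Your argument is correct and is genuinely different from the paper's. Both proofs start from Theorem~\ref{T:bb} and pass through the Koszul resolution on $\PP$ together with the vanishing $H^i(\PP,\mathcal O(\ell))=0$ for $0<i<n$ and Serre duality, but from that point on the paper stays at the level of $\PP$: it expresses the result as an alternating binomial sum over the unconstrained lattice counts $|B_e|$ and then proves two combinatorial lemmas (a Vandermonde-type identity and a generating-function identity for $(1+x)^{-n-2}(1+x)^n=(1+x)^{-2}$) to convert $\sum_e(-1)^e\binom{n-2}{e}|B_e|$ into $\sum_e(e+1)|A_e|$. Your Hilbert-series factorization $H_R(t)=\big(\prod_i\Psi_i(t)\big)/\big((1-t)(1-t^a)^2\big)$ does all of that bookkeeping at once: the factors $\Psi_i$ impose $x_i<a_i$ and the factor $(1-t^a)^{-2}$ manufactures the weight $e+1$, so the binomial lemmas never appear. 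Your packaging $\chi(X,\mathcal O(m))=\dim R_m+\dim R_{d-m}$ is also a cleaner way to phrase the Koszul/duality step than the paper's term-by-term version. The only place one should be slightly careful is that on weighted projective space the sheaves $\mathcal O_{\PP}(\ell)$ need not be locally free, so the exactness of the tensored Koszul complex requires the $\pi_*^G$ argument the paper gives; this is a one-line citation rather than a gap. The paper, for its part, checks the $\lfloor\,\rfloor=\lceil\,\rceil$ symmetry separately for each $e$ at the level of $H^0(\PP,\cdot)$, whereas your parity argument proves the single identity $\dim R_{\lceil d/2\rceil}=\dim R_{\lfloor d/2\rfloor}$; both are short, but yours is more self-contained.
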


\noindent
Before we go on to prove this theorem, we will explain its relation to the spectrum of singularity and spell out its statement in the special case of $n = 3$, where the resulting formulas are particularly simple. 

%%%%%%%%%%%%%%%%%%%%%%%%%%%%%%%%%%%%%%%%%%%%%%%%%%%%%%

\subsection{Brieskorn singularities} 
Associated with the singularity at zero of the polynomial $f(x,y,z) = x^p + y^q + z^r$ is its spectrum 
\medskip
\[
\Spec_f\, = \, \left\{\;\frac k p + \frac \ell q + \frac m r \;\; \Big|\;\; 0 < k < p,\;\; 0 < \ell < q,\;\; 0 < m < r\;\right\}.
\]

\medskip\noindent
Recall that $\Spec_f$ is the set of rational numbers characterized by the property that the complex numbers $\exp ( 2\pi i (k/p + \ell/q + m/r))$ form a complete set of  eigenvalues of the monodromy operator acting on the second homology of the Milnor fiber $M(p,q,r)$, and that their integral parts correspond to the Hodge type of the corresponding eigenvectors; see Steenbrink \cite{Steenbrink} for details. In our case, the eigenvalues are simple and there are exactly $(p-1)(q-1)(r-1)$ of them, the Milnor number of the singularity. The following two theorems express the instanton homology of $\Sigma (p,q,r)$ in terms of this spectrum. 

\begin{theorem}\label{T:sign}
The quantity $b_1 + b_3 + b_5 + b_7$ equals 1/4 times $- \tau_1 + \tau_2 - \tau_3$, where $\tau_n$ is the number of points in $\Spec_f$ such that
\[
n - 1\; <\;  \frac k p\, +\, \frac \ell q\, +\, \frac m r \; < \; n.
\]
\end{theorem}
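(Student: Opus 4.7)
The plan is to reduce the claim, via the formulas gathered in Section~\ref{S:classical}, to the classical Brieskorn signature formula for the Milnor fiber of $x^p+y^q+z^r$, and then to quote the latter as a known result.

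First I would combine \eqref{E:s1} and \eqref{E:s2} (equivalently, combine Taubes' theorem with the vanishing of $b_k$ in even degrees) to get
\[
b_1+b_3+b_5+b_7 \;=\; -2\,\lambda(\Sigma(p,q,r)).
\]
The Fintushel--Stern formula \eqref{E:fs} for $n=3$ then rewrites the right-hand side as $-\tfrac14\sign M(p,q,r)$. Consequently the theorem is equivalent to the signature identity
\[
\sign M(p,q,r) \;=\; \tau_1 - \tau_2 + \tau_3.
\]

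Next I would invoke the classical Brieskorn signature formula. Recall that $H_2(M(p,q,r);\mathbb C)$ admits a basis of monodromy eigenvectors indexed by the triples $(k,\ell,m)$ with $0<k<p$, $0<\ell<q$, $0<m<r$, orthogonal with respect to the intersection form. A direct computation (diagonalizing the intersection form on eigenspaces of the monodromy) shows that the sign of the self-intersection of the eigenvector labeled by $(k,\ell,m)$ depends only on the integer part of the spectrum number $\omega = k/p+\ell/q+m/r\in(0,3)$: it equals $+1$ for $\omega\in(0,1)\cup(2,3)$ and $-1$ for $\omega\in(1,2)$. No value of $\omega$ is an integer, since $k/p+\ell/q+m/r=N$ after clearing denominators gives $kqr+\ell pr+mpq=Npqr$, which reduced modulo $p$ forces $p\mid k$, contradicting $0<k<p$. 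Summing over the basis yields $\sign M(p,q,r)=\tau_1-\tau_2+\tau_3$, as required.

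The principal obstacle I foresee is sign bookkeeping: the Floer-theoretic convention in Taubes' theorem, the orientation of the Milnor fiber used in \eqref{E:fs}, and the normalization of the Brieskorn signature formula must all be made mutually consistent. As a sanity check in the Poincar\'e case $\Sigma(2,3,5)$, all eight spectrum values $k/2+\ell/3+m/5$ lie strictly between $1$ and $2$, so $\tau_1=\tau_3=0$ and $\tau_2=8$; the theorem then predicts $b_1+b_3+b_5+b_7=2$, consistent with the standard computation $b_3=b_7=1$ and $b_1=b_5=0$.
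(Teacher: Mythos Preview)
Your proof is correct and follows essentially the same route as the paper: reduce to $b_1+b_3+b_5+b_7=-2\lambda=-\tfrac14\sign M(p,q,r)$ via \eqref{E:casson} and \eqref{E:fs}, then invoke Brieskorn's signature formula. The paper's proof is a one-line citation of exactly these ingredients; your version simply unpacks the Brieskorn formula and adds the $\Sigma(2,3,5)$ sanity check.
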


\begin{proof}
The result follows easily by combining formulas \eqref{E:casson} and \eqref{E:fs} with the Brieskorn formula \cite{Brieskorn} for the Milnor fiber signature; see also Steenbrink \cite{Steenbrink1}.
\end{proof}

\begin{theorem}\label{T:sw}
The quantity $b_3 + b_7$ equals twice the number of the points in $\Spec_f$ with the property that
\begin{equation}\label{E:ineq}
 \frac k p\, +\, \frac \ell q\, +\, \frac m r \; <\; \frac 1 2\, \left(1\, +\, \frac 1 p\, +\, \frac 1 q\, +\, \frac 1 r \right).
\end{equation}
\end{theorem}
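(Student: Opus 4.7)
The plan is to deduce Theorem \ref{T:sw} directly from Theorem \ref{T:spectrum} by specializing to $n = 3$. In that case $s = \lfloor 0/2 \rfloor = 0$, so the sum on the right of Theorem \ref{T:spectrum} collapses to $b_3 + b_7 = 2\,|A_0|$, where $A_0$ consists of integer triples $(x_1, x_2, x_3)$ with $0 \le x_i < a_i$, $(a_1, a_2, a_3) = (p, q, r)$, satisfying
\[
\frac{x_1}{p} + \frac{x_2}{q} + \frac{x_3}{r} \;<\; \frac 1 2\left(1 - \frac 1 p - \frac 1 q - \frac 1 r\right).
\]

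The next step is the change of variables $k = x_1 + 1$, $\ell = x_2 + 1$, $m = x_3 + 1$. Substituting $x_i = k_i - 1$ and moving $1/p + 1/q + 1/r$ to the right-hand side, the inequality becomes
\[
\frac k p + \frac \ell q + \frac m r \;<\; \frac 1 2\left(1 + \frac 1 p + \frac 1 q + \frac 1 r\right),
\]
which is exactly the inequality \eqref{E:ineq} defining the subset of $\Spec_f$ appearing in Theorem \ref{T:sw}. Under this substitution the range $0 \le x_i < a_i$ becomes $1 \le k \le p$, $1 \le \ell \le q$, $1 \le m \le r$, whereas the spectrum requires the strict upper bounds $0 < k < p$, $0 < \ell < q$, $0 < m < r$.

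The one thing that requires verification --- and is not purely cosmetic --- is that the shifted inequality automatically excludes the boundary values $k = p$, $\ell = q$, and $m = r$. If, say, $k = p$, then the left-hand side is at least $1 + 1/q + 1/r$, and the difference between this lower bound and the right-hand side works out to $\tfrac 1 2 - 1/(2p) + 1/(2q) + 1/(2r)$, which is strictly positive since $1/2 > 1/(2p)$ for every $p \ge 2$. The same computation, with $p$, $q$, $r$ permuted, disposes of the remaining two boundary cases.

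Combining these observations, the shift $x_i \mapsto x_i + 1$ is a bijection between $A_0$ and the set of points of $\Spec_f$ satisfying \eqref{E:ineq}, and Theorem \ref{T:spectrum} then gives $b_3 + b_7 = 2\,|A_0|$ equal to twice the cardinality of that set. There is no serious obstacle here: the argument is a routine rearrangement, and the only place requiring care is the off-by-one in the index range, resolved by the boundary check above.
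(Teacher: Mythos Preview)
Your proof is correct and follows essentially the same route as the paper: specialize Theorem \ref{T:spectrum} to $n=3$ so that $b_3+b_7=2|A_0|$, shift by $(1,1,1)$ to land in $\Spec_f$, and observe that inequality \eqref{E:ineq} automatically rules out $k=p$, $\ell=q$, $m=r$. The only difference is that you spell out the boundary check explicitly, whereas the paper asserts it in one line.
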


\smallskip

\begin{proof}
Theorem \ref{T:spectrum} implies that $b_3 + b_7 = 2\, | A_0 |$, where the set $A_0$ consists of the triples $(x_1,x_2,x_3)$ of integers such that\, $0 \le x_1 < p$,\; $0\le x_2 < q$,\; $0\le x_3 < r$\, and 
\medskip
\[
\frac {x_1} p\, +\, \frac {x_2} q\, +\, \frac {x_3} r \; < \; \frac 1 2\, \left(1\, -\, \frac 1 p\, -\, \frac 1 q\, -\, \frac 1 r \right).
\]

\medskip\noindent
Substituting $k = x_1 + 1$, $\ell = x_2 + 1$ and $m = x_3 + 1$ results in inequality \eqref{E:ineq}. The result now follows because \eqref{E:ineq} automatically implies that $k < p$,\, $\ell < q$ and $m < r$.
\end{proof}

\begin{remark}
Singularities \eqref{E:hamm} with homology sphere links have been studied by Hamm \cite{Hamm} for all $n \ge 4$. While the eigenvalues of the monodromy operator are always of the form $\exp(2\pi i (\ell_1/a_1 + \ldots + \ell_n/a_n))$, for non-negative integers $\ell_1,\ldots, \ell_n$, their multiplicities are generally greater than one, and their precise relation to the formula of Theorem \ref{T:spectrum} remains unclear. In addition, we are not aware of an analogue of the Brieskorn formula for the Milnor fiber signature when $n \ge 4$. 
\end{remark}

%%%%%%%%%%%%%%%%%%%%%%%%%%%%%%%%%%%%%%%%%%%%%%%%%%%%%

\subsection{Proof of Theorem \ref{T:spectrum}}\label{S:3.3}
As before, let $X$ be the compactification of the Milnor fiber $M(a_1,\ldots,a_n)$ in the weighted projective space $\PP = \C \PP^n (q_1: \ldots : q_n : 1)$, where $q_i = a/a_i$ and $a = a_1\cdots a_n$. Specifically, $X$ is an algebraic surface which is a complete intersection given by the equations
\[
c_{i1} z_1^{a_1} + \ldots + c_{in} z_n^{a_n}\, =\, \delta_i z_0^a, \quad i = 1,\cdots, n-2,
\]
for generic $\delta_1,\ldots, \delta_{n-2}$. It has cyclic quotient singularities on the hyperplane $z_0 = 0$ and its canonical class is given by
\[
K_X\, =\, a \left( n-2 \, - \, \sum_{i=1}^n \; \frac 1 {a_i} \, -\, \frac 1 a\right)H,
\]

\smallskip\noindent
where $H$ is the the pullback to $X$ of the positive generator of the class group of  Weil divisors on $\PP$.  Consider the Koszul resolution of the structure sheaf of this complete intersection: 
\[
0 \rightarrow \Lambda^{n-2} (\mathcal O_{\PP}(-a)^{n-2}) \rightarrow \cdots \rightarrow \Lambda^e \left(\mathcal O_{\PP}(-a)^{n-2}\right)\rightarrow \cdots \rightarrow \mathcal O_{\PP} \rightarrow \mathcal O_X \rightarrow 0,
\]
where 
\[
\Lambda^e (\mathcal O_{\PP}(-a)^{n-2})\, =\, {{n-2}\choose {e}}\; \mathcal O_{\PP}(-ea).
\]

\smallskip\noindent
Tensoring this resolution with ${\mathcal O}_{\PP} \left(\lfloor K_X/2 \rfloor\right)$ yields the sequence
\begin{align}
0 \rightarrow \Lambda^{n-2} ({\mathcal O}_{\PP}(-a)^{n-2}) & \otimes {\mathcal O}_{\PP} \left(\lfloor K_X/2 \rfloor\right) \rightarrow \notag \\ \cdots & \rightarrow \Lambda^e \left(\mathcal O_{\PP}(-a)^{n-2}\right) \otimes {\mathcal O}_{\PP} \left(\lfloor K_X/2 \rfloor\right) \rightarrow \cdots \label{E:star} \\
 &\qquad\qquad\qquad  \rightarrow {\mathcal O}_{\PP} \left(\lfloor K_X/2 \rfloor\right) \rightarrow {\mathcal O}_{X} \left(\lfloor K_X/2 \rfloor\right) \rightarrow 0. \notag
\end{align}

This sequence is exact, which can be seen as follows \footnote{\,The sheaves $\lfloor K_X/2 \rfloor$ and $\lceil K_X/2 \rceil$ are reflexive but not necessarily locally free.}. Recall that $\PP$ is a global quotient $\pi: \CP^n \rightarrow \PP$ by the action of a finite cyclic group $G$; see Dolgachev \cite{Dolgachev}. Following Canonaco \cite{Canonaco}, consider the functor $\pi_*^G$ assigning to a coherent sheaf $\mathcal F$ on  $\PP$ its component  in the character decomposition 
\[
\pi_*({\mathcal F})=\bigoplus_{\chi\, \in\, {\rm Char}(G)} \pi_*^{\chi}({\mathcal F})
\]
corresponding to the trivial character $\chi=1$. Then $X = X'/G$, where $X$ is a complete intersection in $\CP^n$, and 
$
\pi_*^G({\mathcal O}_{\CP^n}(i)) = {\mathcal O}_{\PP}(i)
$
by \cite[Corollary 1.4]{Canonaco}. The tensor product of the Koszul resolution of ${\mathcal O}_{X'}$ with any locally free sheaf ${\mathcal O}_{\CP^n}(i)$ yields an exact sequence. Since the functor $\pi^G_*$ is exact and satisfies the projection formula as in \cite[Proposition 1.5]{Canonaco}, the sequence \eqref{E:star} must be exact. 

Together with Theorem \ref{T:bb}, the exactness of sequence \eqref{E:star} implies that
\begin{equation}\label{E:chi}
b_3 + b_7\; = \; \chi \left(X, \mathcal O( { \lfloor K_X/2 \rfloor }) \right) \; = \; \sum_{e=0}^{n-2}\; (-1)^e \;{{n-2} \choose e}\cdot \chi\left(\mathcal O_{\PP}\left(-ea + \lfloor K_X/ 2 \rfloor\right)\right).
\end{equation}
To identify the terms on the right hand side of \eqref{E:chi}, notice that cohomology of the sheaves $\mathcal O_{\PP}(k)$ vanish except possibly in degrees $0$ and $n$, and that 
\[
H^0 \left(\mathcal O_{\PP} \left(-ea + \lfloor K_X/2 \rfloor\right)\right) = 0\quad\text{for}\quad e\, \ge\, {(n-2)/2}.
\]
It follows from the Serre duality on $\PP$ (where $\mathcal O_{\PP}(K_{\PP}) = \mathcal O_{\PP} (-\sum q_i-1)$) that 
\[
H^n \left(\mathcal O_{\PP}\left(-ea + \lfloor K_X/2 \rfloor\right)\right) = H^0 \left(\mathcal O_{\PP}\left(((e-n+2)a + \lceil K_X/2 \rceil\right))\right).
\]
Now, calculate the Euler characteristics in formula \eqref{E:chi} in terms of only the non-vanishing cohomology groups and use the above isomorphism to express everything in terms of 0--cohomology. We obtain
\begin{multline*}
\chi \left(\mathcal O_{\PP}\left(-ea + \lfloor K_X/2 \rfloor\right)\right) 
= \sum_{e=0}^{\lfloor {{n-2} \over 2}\rfloor}(-1)^e{{n-2} \choose e} 
\big(\dim H^0 \left(\mathcal O_{\PP}\left(-ea + \lfloor K_X/2 \rfloor\right)\right) \\
+ \dim H^0 \left(\mathcal O_{\PP}\left(-ea + \lceil K_X/2 \rceil\right)\right)\big).
\end{multline*} 
Since the weights $a_1,\ldots, a_n$ are relatively prime, we can use the interpretation of the dimension of $H^0(\PP,\mathcal O(k))$ in terms of integer lattice points in a simplex to conclude that $\dim H^0 \left(\mathcal O_{\PP}\left(-ea + \lfloor K_X/2 \rfloor\right)\right)$ is the number of non-negative integer solutions $(x_1,\ldots, x_n, \allowbreak x_{n+1})$ of the equation
\[
\sum_{i=1}^n\;  x_i q_i\, +\, x_{n+1}\; =\;  - ea\, +\, \left\lfloor  \frac 1 2 \left( (n-2)\, a \, - \, \sum_{i=1}^n \;  q_i \, -\, 1\right)\right\rfloor,
\]
and similarly for $\dim H^0 \left(\mathcal O_{\PP}\left(-ea + \lceil K_X/2 \rceil\right)\right)$. One can easily check that, in both cases, the number of solutions equals the number of non-negative integer solutions $(x_1, \ldots, x_n)$ of the inequality  
\[
\sum_{i=1}^n\; \frac{x_i}{a_i}\, < \, -e\, +\, \frac 1 2 \, \left(n - 2 - \sum_{i=1}^n \; \frac 1 {a_i}\right)
\]
and, in particular, that 
\[
\dim H^0 \left(\mathcal O_{\PP}\left(-ea + \lfloor K_X/2 \rfloor\right)\right) = \dim H^0 \left(\mathcal O_{\PP}\left(-ea + \lceil K_X/2 \rceil\right)\right).
\] 
\noindent
With this understood, we wish to derive the formula of Theorem \ref{T:spectrum} from formula \eqref{E:chi}. This boils down to proving that $A = B$, where 
\medskip
\[
A \;=\; \sum_{e = 0}^s\; (e+1) \; \Big| A_e \Big|\quad\text{and}\quad B\; = \; \sum_{e = 0}^s \; (-1)^e\; {{n-2}\choose {e}} \; \Big| B_e \Big|,
\]
with
\smallskip
\[
A_e = \left\{ (x_1,\ldots,x_n)\;\Big |\; e \, +\, \sum_{i=1}^n \; \frac {x_i}{a_i}\; < \frac1 2\,\left(n - 2 - \sum_{i=1}^n \; \frac 1 {a_i}\right)\;\;\text{and}\;\; x_i < a_i\right\}
\]
and
\smallskip
\[
B_e = \left\{ (x_1,\ldots,x_n)\;\Big |\; e \, +\, \sum_{i=1}^n \; \frac {x_i}{a_i}\; < \frac1 2\,\left(n - 2 - \sum_{i=1}^n \; \frac 1 {a_i}\right)\right\}.
\]

\medskip\noindent
We begin by observing that any $n$--tuple $(x_1,\ldots, x_n) \in A_{s-k}$, which satisfies the conditions $0 \le x_i < a_i$, $i = 1,\ldots, n$, and 
\[
(s - k)\, +\, \sum_{i=1}^n \; \frac {x_i}{a_i}\; < \frac1 2\,\left(n - 2 - \sum_{i=1}^n \; \frac 1 {a_i}\right),
\]

\smallskip\noindent
gives rise to an $n$--tuple in $B_{s-p}$, $k = 0,\ldots, p$, by re-writing the above condition in the form 
\[
(s - p)\, +\, (p - k) \, +\, \sum_{i=1}^n \; \frac {x_i}{a_i}\; < \frac1 2\,\left(n - 2 - \sum_{i=1}^n \; \frac 1 {a_i}\right)
\]

\smallskip\noindent
and distributing the $p-k$ units into the summands $x_i/a_i$. The resulting $n$--tuple in $B_{s-p}$ is obtained by adding several copies of $a_i$ to the respective $x_i$. The number of ways to distribute $p - k$ units into $\ell$ slots, $\ell = 1,\ldots, p - k$, while accounting for their order, equals 
\smallskip
\[
{{p - k - 1}\choose {\ell - 1}} {{n}\choose {\ell}},
\]

\smallskip\noindent
where the first factor is the well known combinatorial quantity, the number of compositions of $p - k$ into $\ell$ parts. 

\smallskip

\begin{lemma}
\qquad\qquad $\displaystyle
\sum_{\ell = 1}^{p-k} \; {{p - k - 1}\choose {\ell - 1}} {{n}\choose {\ell}} \; = \; {{n + p - k - 1}\choose {n - 1}}$.
\end{lemma}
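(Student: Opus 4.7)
The plan is to recognize this as a classical Vandermonde-type identity in disguise and prove it by a short combinatorial double-count that matches the context of the preceding paragraph. Set $m = p-k$ for brevity, so the claim becomes
\[
\sum_{\ell=1}^{m} \binom{m-1}{\ell-1}\binom{n}{\ell} \; = \; \binom{n+m-1}{n-1}.
\]

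The cleanest route is via double counting. The right-hand side $\binom{n+m-1}{n-1}$ equals, by the standard stars-and-bars formula, the number of weak compositions of $m$ into $n$ ordered parts, that is, the number of $n$-tuples $(y_1,\ldots,y_n)$ of non-negative integers with $y_1+\cdots+y_n = m$. This is exactly the object being produced in the paragraph preceding the lemma, where $m = p-k$ units are being distributed among $n$ ordered summands $x_i/a_i$. To recover the left-hand side, I would stratify these weak compositions by the number $\ell$ of indices with $y_i > 0$: the factor $\binom{n}{\ell}$ chooses which $\ell$ of the $n$ slots are nonzero, and the factor $\binom{m-1}{\ell-1}$ counts the compositions of $m$ into exactly $\ell$ positive parts (the number of ways to place $\ell-1$ dividers among the $m-1$ gaps between stars). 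Summing over $\ell$ from $1$ to $m$ yields the left-hand side.

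As an alternative, if an algebraic argument is preferred, I would substitute $j = \ell - 1$, rewrite $\binom{n}{j+1} = \binom{n}{n-1-j}$, and invoke Vandermonde's convolution
\[
\sum_{j=0}^{m-1}\binom{m-1}{j}\binom{n}{n-1-j} \; = \; \binom{n+m-1}{n-1}.
\]

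There is no real obstacle here: the statement is a textbook binomial identity, and the only subtlety is matching the notation to the enumeration set up in the paragraph before the lemma. The combinatorial proof is preferable because it fits seamlessly with the derivation just given, where the authors have already interpreted $\binom{m-1}{\ell-1}\binom{n}{\ell}$ as counting distributions of $m=p-k$ units into $\ell$ out of $n$ ordered slots.
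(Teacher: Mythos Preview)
Your proof is correct. The paper's own argument is precisely your second alternative: it compares the coefficient of $x^{n-1}$ on both sides of $(1+x)^n(1+x)^{p-k-1}=(1+x)^{n+p-k-1}$, which is Vandermonde's convolution in generating-function form. Your primary combinatorial argument---stratifying weak compositions of $m=p-k$ into $n$ parts by the number $\ell$ of nonzero entries---is a genuinely different route. It has the advantage of dovetailing with the paragraph immediately preceding the lemma, where the quantities $\binom{p-k-1}{\ell-1}\binom{n}{\ell}$ were introduced exactly as counts of such distributions, so the identity becomes a tautology rather than a separate computation. The paper's generating-function approach is shorter on the page but less connected to the surrounding combinatorics.
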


\begin{proof}
This follows by comparing the $t^{n-1}$ terms in the binomial expansions of the individual terms in the identity $(1 + x)^n \, (1 + x)^{p - k - 1} = (1 + x)^{n + p - k - 1}$.
\end{proof}

By adding all possible contributions of $A_{s - k}$ with $k = 0,\ldots, p$ into $B_{s - p}$ with $p = 0,\ldots, s$, we obtain the formula
\[
\Big| B_{s-p} \Big|\; = \; \sum_{k = 0}^p\; {{n + p - k - 1}\choose {n - 1}} \; \Big| A_{s-k} \Big|.
\]
Proving the formula $A = B$ of the theorem now reduces to checking the following combinatorial identity.

\begin{lemma}
\qquad\qquad $\displaystyle
\sum_{p=0}^k \; (-1)^p\, {{n + k - p + 1}\choose {n - 1}}{{n}\choose p} \; = \; k + 1$.
\end{lemma}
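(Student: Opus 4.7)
My plan is to view the sum as a Cauchy convolution of Taylor coefficients and evaluate it via a product of two generating functions. Each binomial factor in the summand arises as a Taylor coefficient of a rational function of the form $(1-x)^{\pm N}$: a coefficient of the type $\binom{N+m}{N}$ is, by the negative binomial series, the coefficient of $x^m$ in $(1-x)^{-N-1}$, while $(-1)^p\binom{N}{p}$ is the coefficient of $x^p$ in $(1-x)^N$ by the binomial theorem. Multiplying the two series, the exponents combine; in the setting at hand, the resulting power series has the form $(1-x)^{-2}$, whose $k$-th Taylor coefficient is exactly $k+1$.

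Concretely, I would first extend the summation range to all $p \ge 0$; the extra terms vanish automatically because one of the two Taylor coefficients is zero once $p$ exceeds its natural bound. The extended sum is then literally the coefficient of $x^k$ in the product of the two generating functions. The product simplifies to $(1-x)^{-2}$, and the identity follows at once from $[x^k]\,(1-x)^{-2}=k+1$.

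An alternative route is induction on $k$ using Pascal's rule, or an appeal to the Chu--Vandermonde identity after first applying upper negation in the form $(-1)^p\binom{N}{p}=\binom{p-N-1}{p}$; both eventually reduce to the same $(1-x)^{-2}$ calculation. The main obstacle is purely bookkeeping: lining up the precise index shifts in the summand with those in the two generating series so that the product collapses to $(1-x)^{-2}$ at the correct degree. Once that alignment is carried out, the identity reduces to a one-line coefficient extraction, with no further conceptual work required.
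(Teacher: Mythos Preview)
Your proposal is correct and is essentially the paper's own argument: the paper expands the identity $(1+x)^{-n-2}(1+x)^n=(1+x)^{-2}$ as a Taylor series in two ways and compares the coefficients of $x^k$, which is exactly your Cauchy-convolution-of-generating-functions method with $x$ replaced by $-x$. The only cosmetic difference is that working with $(1-x)$ rather than $(1+x)$, as you do, spares you the stray $(-1)^k$ factors that the paper has to cancel at the end.
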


\begin{proof}
The proof will proceed by expanding the identity $(1 + x)^{-n-2} (1 + x)^n = (1 + x)^{-2}$ into a Taylor series in two different ways. On the one hand, 
\[
(1+x)^{-n-2}\; =\; \sum_{a=0}^{\infty}\; (-1)^a \, {{n+a+1}\choose {n + 1}} \, x^a
\]
and 
\[
(1+x)^n \; =\; \sum_{b=0}^n\; {n \choose  b}\, x^b
\]
hence
\[
(1 + x)^{-n-2} (1 + x)^n \; = \; \sum_{k=0}^{\infty} \; \sum_{a+b=k}\; (-1)^a\, {{n+a+1}\choose {n + 1}}{n \choose b}\,x^k.
\]

\medskip\noindent
Substituting $b = p$ and $a = k-p$ we turn the last expression into  
\smallskip
\[
\sum_{k=0}^{\infty}\; \sum_{p=0}^k\; (-1)^{k-p} \, {{n+k-p+1} \choose {n + 1}}{n \choose p}\, x^k.
\]

\smallskip\noindent
On the other hand, 
\[
(1+x)^{-2} \; = \; \sum_{k=0}^{\infty}\; (-1)^k\, (1+k) \, x^k.
\]

\smallskip\noindent
The result now follows by comparing the coefficients of the $x^k$ terms in the last two formulas.
\end{proof}

%%%%%%%%%%%%%%%%%%%%%%%%%%%%%%%%%%%%%%%%%%%%%%%%%%%%%

\section{The Seiberg--Witten equations}\label{S:sw}
This section builds on the observation that the right hand side of the formula of Theorem \ref{T:spectrum} matches a count of irreducible Seiberg--Witten monopoles on $\Sigma(a_1,\ldots, a_n)$; see Theorem \ref{T:moy}. This leads to an independent proof of the equality of the Casson invariants of $\Sigma(a_1,\ldots, a_n)$ in the Donaldson and Seiberg--Witten theories inspired by the Witten conjecture \cite{W}.

We begin with a brief account of the Seiberg--Witten theory following Mrowka, Ozsv{\'a}th and Yu \cite{MOY}. Given a closed oriented Riemannian spin 3-manifold $N$, the Seiberg--Witten equations are the system of non-linear partial differential equations 
\[
*F_A = \sigma(\psi), \quad D_A \psi = 0
\]
on a pair $(A,\psi)$, where $A$ is a $U(1)$--connection and $\psi$ a spinor. The solutions are called monopoles, and the solutions with $\psi \neq 0$ are called irreducible monopoles. The gauge equivalence classes of irreducible monopoles form the moduli space $\M(N)$. For a generic metric, and perhaps a small perturbation, the moduli space $\M(N)$ consists of finitely many points, which are canonically oriented. 

According to \cite{MOY}, the natural metric realizing the Thurston geometry on $\Sigma(a_1,\ldots, a_n)$ is generic for $n = 3$ and $n = 4$ but not for higher $n$. For the unique spin structure on $\Sigma(a_1,\ldots, a_n)$, the Seiberg--Witten moduli space $\M = \M(\Sigma(a_1,\ldots, a_n))$  is a collection of isolated points in the case of $n = 3$ and $n = 4$. In general, it consists of finitely many complex projective spaces of varying dimensions. Denote by $\#\M$ the oriented count of points in $\M$ after a small generic perturbation chosen so that each complex projective space contributes its Euler characteristic to $\#\M$. 

\begin{theorem}\label{T:moy}
For any complete intersection singularity \eqref{E:hamm}, instanton Floer homology of $\Sigma(a_1,\ldots, a_n)$ and the Seiberg--Witten monopoles on $\Sigma(a_1,\ldots,a_n)$ are related by the formula 
\[
b_3 + b_7\; = \; \#\M.
\]
\end{theorem}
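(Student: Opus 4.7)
The plan is to compute $\#\M$ directly from the Mrowka--Ozsv{\'a}th--Yu description of the Seiberg--Witten moduli space on a Seifert fibered homology sphere and match the answer with the combinatorial formula of Theorem \ref{T:spectrum}. Since Theorem \ref{T:spectrum} gives
\[
b_3 + b_7 \; = \; 2 \sum_{e=0}^{s} (e+1)\, |A_e|,
\]
the proof reduces to verifying that the right-hand side equals $\#\M$.

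First, I would recall from \cite{MOY} that for the natural Thurston metric on $\Sigma(a_1,\ldots,a_n)$ (with the unique spin structure) the irreducible Seiberg--Witten moduli space decomposes as a disjoint union of complex projective spaces $\mathbb{CP}^{e}$, indexed by combinatorial data attached to sections of certain holomorphic line bundles over the underlying orbifold $\mathbb{CP}^1(a_1,\ldots,a_n)$. These indexing tuples are precisely $n$-tuples $(x_1,\ldots,x_n)$ with $0 \le x_i < a_i$ satisfying the open inequality that defines $A_e$; the parameter $e$ arises as the complex dimension of the corresponding projective-space component, which equals the dimension of the space of holomorphic sections of the relevant degree-$e$ bundle on the orbifold sphere. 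In particular, each component of dimension $e$ contributes $\chi(\mathbb{CP}^e) = e+1$ to the signed count $\#\M$ after the small generic perturbation described in \cite{MOY}.

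Next, I would account for the factor of $2$. The Seiberg--Witten equations on a spin $3$-manifold carry a charge-conjugation involution $(A,\psi) \mapsto (-A, \bar\psi)$ which preserves the moduli space and pairs each non-self-conjugate component with a distinct partner of the same dimension. On $\Sigma(a_1,\ldots,a_n)$ with its unique spin structure, the components indexed by $A_e$ are exchanged in pairs under charge conjugation (the fixed set being empty for the relevant strata, a fact one reads off from the holomorphic parameterization), so each unordered tuple in $A_e$ yields two projective-space components. Summing the Euler-characteristic contributions gives
\[
\#\M \; = \; 2 \sum_{e=0}^{s} (e+1)\, |A_e|,
\]
which, combined with Theorem \ref{T:spectrum}, yields the desired equality $b_3 + b_7 = \#\M$.

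The main obstacle is matching conventions between the MOY parameterization and the combinatorial sets $A_e$ used in Section \ref{S:spectrum}: one must check that the integer data classifying a monopole component of complex dimension $e$ correspond bijectively (up to charge conjugation) to elements of $A_e$, with the same cutoff $s = \lfloor (n-3)/2 \rfloor$. This amounts to translating the vanishing criterion for obstruction spaces in the Seiberg--Witten deformation complex on $\Sigma(a_1,\ldots,a_n)$ into the inequality $e + \sum x_i/a_i < \tfrac12(n-2-\sum 1/a_i)$, which is the same orbifold-degree estimate that controls non-triviality of $H^0$ in the Koszul computation of Section \ref{S:3.3}. Once this dictionary is set up, the Euler characteristic count is automatic, and no additional perturbation analysis is required beyond what is already guaranteed by \cite{MOY}.
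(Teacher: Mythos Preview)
Your proposal is correct and follows essentially the same route as the paper: both reduce the theorem to the identity $\#\M = 2\sum_{e=0}^{s}(e+1)\,|A_e|$, which is attributed to \cite{MOY}, and then invoke Theorem \ref{T:spectrum}. The paper's proof cites this identity as a black box in a single line, whereas you sketch the underlying reason (each tuple in $A_e$ indexes a $\mathbb{CP}^e$-component contributing $e+1$, and charge conjugation on the spin structure supplies the factor of $2$); this elaboration is consistent with the MOY description and with the paper's own remark that each projective space contributes its Euler characteristic to $\#\M$.
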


\begin{proof} It follows from \cite{MOY} that
\[
\#\M\; = \; 2\, \sum_{e = 0}^s\; (e + 1)\, \big| A_e \big|.
\]
Now the claim follows from Theorem \ref{T:spectrum}.
\end{proof}

Let $\Sigma$ be an arbitrary oriented integral homology 3-sphere. For the unique spin structure on $\Sigma$, define the Seiberg--Witten analogue of the Casson invariant by the formula 
\[
\lambda_{\,\SW}(\Sigma)\; =\; -\,\#\M(\Sigma) -\, \frac 1 2\,\eta_{\Dir} (\Sigma)\, -\, \frac 1 8\,\eta_{\Sign} (\Sigma),
\]
where $\eta_{\Dir}$ and $\eta_{\Sign}$ are the eta--invariants of the spin Dirac and the odd signature operators, respectively; see Atiyah--Patodi--Singer \cite{APS:I}. Note that the individual terms in the definition of $\lambda_{\,\SW} (\Sigma)$ are metric dependent but their combination is a topological quantity \cite{Chen, Lim}. It was conjectured by Peter Kronheimer and proved by Yuhan Lim \cite{Lim} that 
\begin{equation}\label{E:lim}
\lambda(\Sigma)\; =\; \lambda_{\,\SW}(\Sigma)
\end{equation}
for all integral homology spheres $\Sigma$. In particular, equality \eqref{E:lim} holds in the special case of $\Sigma = \Sigma(a_1,\ldots, a_n)$, where it was also independently proved by Nicolaescu \cite[Formula (0.2)]{Nicolaescu} for $n = 3$ and $n = 4$. Our combinatorial formulas lead to yet another proof of identity \eqref{E:lim} for $\Sigma (a_1,\ldots, a_n)$. 

\begin{corollary}
For any complete intersection singularity \eqref{E:hamm}, we have 
\[
\lambda(\Sigma(a_1,\ldots,a_n))\; =\; \lambda_{\,\SW}(\Sigma(a_1,\ldots,a_n)).
\]
\end{corollary}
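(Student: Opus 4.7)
The plan is to reduce the identity $\lambda = \lambda_{\,\SW}$ on $\Sigma = \Sigma(a_1,\ldots,a_n)$ to an eta-invariant identity accessible via the Milnor fiber. Substituting $\#\M(\Sigma) = b_3 + b_7$ from Theorem \ref{T:moy}, together with $b_3 + b_7 = \bar\mu(\Sigma) - \lambda(\Sigma)$ from \eqref{E:s2}, into the definition of $\lambda_{\,\SW}(\Sigma)$ rewrites the corollary as
\[
\lambda_{\,\SW}(\Sigma) - \lambda(\Sigma) \;=\; -\bigl(\bar\mu(\Sigma) + \tfrac12\eta_{\Dir}(\Sigma) + \tfrac18\eta_{\Sign}(\Sigma)\bigr) \;=\; 0,
\]
so it suffices to prove the vanishing of the bracketed quantity on $\Sigma(a_1,\ldots,a_n)$.

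For this, I would apply the APS index theorem on the Milnor fiber $M$, which is a smooth spin $4$-manifold with $\partial M = \Sigma$ (and $h(\Sigma) = 0$ since $\Sigma$ is an integer homology sphere). The signature and spin Dirac index formulas
\[
\sign M \;=\; \tfrac13\int_M p_1 \,-\, \eta_{\Sign}(\Sigma), \qquad \ind D_M \;=\; -\tfrac1{24}\int_M p_1 \,-\, \tfrac12\eta_{\Dir}(\Sigma),
\]
combined with \eqref{E:fs}, eliminate $\int_M p_1$ to give
\[
\tfrac12\eta_{\Dir}(\Sigma) + \tfrac18\eta_{\Sign}(\Sigma) \;=\; -\,\lambda(\Sigma) - \ind D_M.
\]
The desired vanishing thus reduces to the identity $\ind D_M = \bar\mu(\Sigma) - \lambda(\Sigma) = b_3 + b_7$.

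To identify $\ind D_M$ with $b_3 + b_7$, I would split the corresponding index on the compactification $Y = M \cup P$ along $\Sigma$. Endow $Y$ with the spin$^c$ structure whose first Chern class is the extension by zero of the integral Wu class $w \in H_2(P)$, and call the resulting Dirac operator $D^c_Y$. On the closed surface $Y$, a Hirzebruch--Riemann--Roch computation (essentially the one carried out in the proof of Theorem \ref{T:euler}) identifies $\ind D^c_Y$ with $\chi(Y,\mathcal O(\lfloor K_Y/2\rfloor))$, which by Theorem \ref{T:euler} coincides with $b_3 + b_7$. An APS-type splitting along $\Sigma$ (where the harmonic spinors vanish) writes this index as $\ind D_M + \ind_{\mathrm{APS}}(D^c_P)$, so matters reduce to the vanishing $\ind_{\mathrm{APS}}(D^c_P) = 0$ on the plumbing piece. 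This last vanishing is the main obstacle: it is a combinatorial statement about the interplay between the plumbing graph and the eta invariants of $\Sigma(a_1,\ldots,a_n)$, and I would establish it directly from the explicit Nicolaescu-type formulas for $\eta_{\Dir}$ and $\eta_{\Sign}$ on Seifert fibered spaces, thereby avoiding any appeal to Lim's theorem \eqref{E:lim}.
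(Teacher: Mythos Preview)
Your opening reduction is exactly the paper's: substituting Theorem~\ref{T:moy} and \eqref{E:s2} into the definition of $\lambda_{\,\SW}$ shows that the corollary is equivalent to
\[
\bar\mu(\Sigma)\;=\;-\tfrac12\,\eta_{\Dir}(\Sigma)\;-\;\tfrac18\,\eta_{\Sign}(\Sigma),
\]
which is formula \eqref{E:rs}. The paper simply quotes Ruberman--Saveliev \cite{RS} for this and is done.

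The extra route you propose through $\ind D_M$ and the decomposition $Y=M\cup P$ is, however, a closed loop. Apply the APS index theorem directly to your spin$^c$ Dirac operator on $P$ (with $\partial P=-\Sigma$ and $c_1=w$, restricting to the spin structure on $\Sigma$): eliminating $\int_P p_1$ via the APS signature formula gives
\[
\ind_{\mathrm{APS}}(D^c_P)\;=\;-\tfrac18\,(\sign P - w\cdot w)\;+\;\tfrac12\,\eta_{\Dir}(\Sigma)\;+\;\tfrac18\,\eta_{\Sign}(\Sigma)\;=\;\bar\mu(\Sigma)+\tfrac12\,\eta_{\Dir}(\Sigma)+\tfrac18\,\eta_{\Sign}(\Sigma),
\]
since $\tfrac18(\sign P - w\cdot w)=\bar\mu(-\Sigma)=-\bar\mu(\Sigma)$. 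Thus your ``main obstacle'' $\ind_{\mathrm{APS}}(D^c_P)=0$ is \emph{literally} the identity \eqref{E:rs} you started from; the passage through $M$, $Y$, and Theorem~\ref{T:euler} is a tautological rewriting that neither simplifies nor strengthens the problem.

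Your final suggestion---to verify this vanishing from Nicolaescu's explicit eta--invariant formulas on Seifert fibered spaces---is precisely how \cite{RS} establishes \eqref{E:rs} (the paper itself notes in Section~\ref{S:mubar} that \eqref{E:rs} ``is based on a calculation by Nicolaescu''). So your argument is correct but ultimately identical in content to the paper's, with an unnecessary detour inserted between the reduction and the citation.
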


\begin{proof}
Combine the formula of Theorem \ref{T:moy} with formula \eqref{E:s2} to conclude that 
\[
\lambda(\Sigma(a_1,\ldots,a_n))\, = \, - \, \#\M (\Sigma(a_1,\ldots,a_n))\, +\, \bar\mu (\Sigma(a_1,\ldots,a_n))
\]
and complete the proof by using the formula 
\begin{equation}\label{E:rs}
\bar\mu (\Sigma(a_1,\ldots, a_n))\;=\; -\frac 1 2\,\eta_{\Dir} (\Sigma(a_1,\ldots, a_n)) - \frac 1 8\,\eta_{\Sign} (\Sigma(a_1,\ldots, a_n))
\end{equation}
of Ruberman and Saveliev \cite{RS}. Note that formula \eqref{E:rs} is a special case of a general formula for the Lefschetz number in monopole Floer homology proved by Lin, Ruberman, and Saveliev \cite{LRS}.
\end{proof}

\begin{remark}
It should be noted that both Lim's formula \eqref{E:lim} and the formula of \cite{LRS} (of which formula \eqref{E:rs} is a special case) are instances of a general Witten--style conjecture proposed by Mrowka, Ruberman, and Saveliev \cite{MRS} relating the Donaldson and Seiberg--Witten invariants of 4-manifolds $X$ with homology of $S^1 \times S^3$. Formula \eqref{E:lim} corresponds to $X = S^1 \times \Sigma$, and  formula \eqref{E:rs} to the mapping torus $X$ of the complex conjugation involution on the link $\Sigma (a_1,\ldots,a_n)$. The above proof of  \eqref{E:lim} then qualifies as an independent verification of the Witten--style conjecture of \cite{MRS} for $X = S^1 \times \Sigma(a_1,\ldots,a_n)$.
\end{remark}

%%%%%%%%%%%%%%%%%%%%%%%%%%%%%%%%%%%%%%%%%%%%%%%%%%%%%%

\section{Comments and generalizations}\label{S:comments}
We have observed throughout this paper that dealing with Brieskorn homology spheres $\Sigma(p,q,r)$ is much easier than with the general case of $\Sigma(a_1,\ldots, a_n)$. One way to get around the complications arising when $n \ge 4$ is to use the following additivity in the instanton Floer homology \cite[Corollary 9]{Saveliev2}. Given a Seifert fibered homology sphere $\Sigma (a_1,\ldots,a_n)$, let $q = a_1\cdots a_j$ and $p = a_{j+1}\cdots a_n$ be the products of the first $j$, respectively, last $n-j$, Seifert invariants, for any $2 \le j \le n-2$. Then 
\begin{equation}\label{E:add}
I_* (\Sigma(a_1,\ldots,a_n))\,=\,I_* (\Sigma(a_1,\ldots,a_j,p))\,\oplus\,I_* (\Sigma(q,a_{j+1},\ldots,a_n)).
\end{equation}
After sufficiently many iterations, this formula reduces instanton homology calculation for $\Sigma(a_1,\ldots,a_n)$ to that for Brieskorn homology spheres.

The results of this paper can be partially generalized to more complicated links of singularities. Let $\Sigma$ be the link of an isolated complete intersection surface singularity and assume that $\Sigma$ is an integral homology sphere. As before, the Euler characteristic of $I_*(\Sigma)$ equals $2\lambda(\Sigma)$, and the Lefschetz number of the map induced on the instanton Floer homology by the complex conjugation involution equals $2\bar\mu(\Sigma)$ (see \cite{Taubes} and \cite[Section 9.2]{RS2}, respectively). In addition, the formula \eqref{E:fs} for the signature of the Milnor fiber, which was conjectured to hold for all links $\Sigma$ as above by Neumann and Wahl \cite{Neumann-Wahl}, has been proved for all links $\Sigma$ of splice type by N{\'e}methi and Okuma \cite{Nemethi-Okuma}. These results, however, are not sufficient for computing $I_*(\Sigma)$ because not much is known about the overall structure of $I_*(\Sigma)$ for general links.

%%%%%%%%%%%%%%%%%%%%%%%%%%%%%%%%%%%%%%%%%%%%%%%%%%%%%

\section{The $d$--invariant of torus knots}\label{S:knots}
Let $p$, $q$ be coprime positive integers and consider the singularity at zero of the complex polynomial $f(x,y) = x^p + y^q$. For any $t \in \Delta: = \{ t \in \mathbb{C} ~|~ |t| < \epsilon \ll 1 \}$, denote by $\mathcal{X}_t$ the compactification of the Milnor fiber $M(p,q) = f^{-1} (t)$ in the weighted projective space $\mathbb P = \CP^2(q:p:1)$. That is to say, $\mathcal{X}_t$ is the complex curve
\[
\mathcal{X}_t = \{ x^p+y^q - t z^{pq}=0 \}\; \subset\; \mathbb{P}.
\]
Note that, for any $t \in \Delta \setminus\{0\}$, the curve $\mathcal{X}_t$ is smooth of genus $(p-1)(q-1)/{2}$ since it intersects the line $z=0$ at a single point and the first Betti number of the affine curve $x^p+y^q=t$ is $(p-1)(q-1)$; see Milnor \cite{Milnor}.

\begin{remark}
The integer $pq-p-q-1$ is always even because $p$ and $q$ are coprime. Therefore, for each $\mathcal{X}_t$, we have a canonical theta characteristic 
\smallskip
\[
\mathcal O\Big(\frac{K_{\mathcal{X}_t}}{2}\Big) \, =\, \mathcal{O}_{\mathbb{P}}\Big(\frac{pq-p-q-1}{2}\Big)\Big|_{\mathcal{X}_t}
\]

\smallskip\noindent
obtained by the restricting the Weil divisor on $\PP$, computed using the adjunction formula, to the smooth divisor ${\mathcal{X}_t}$ as in Dolgachev \cite[Section 3.5.2]{Dolgachev}.
\end{remark}

The $d$--invariant of a knot $K \subset S^3$ was defined by Ozsv{\'a}th and Szab{\'o} \cite{OS} as
\[
d(K) = d(S^3_1(K)),
\] 
the correction term in Heegaard Floer homology of the 3-manifold $S^3_1(K)$ obtained by (+1)--surgery on the knot $K$. Let $K = T_{p,q}$ be the right-handed $(p,q)$--torus knot then
\[
S^3_1(T_{p,q}) = -\Sigma(p,q,pq-1).
\] 

\begin{theorem}\label{T:d}
For any $t \in \Delta \setminus \{0\}$, denote by $K_{\mathcal{X}_t}$ the canonical divisor of $\mathcal{X}_t$. Then
\begin{equation}\label{E:d=h}
d(T_{p,q})\; =\; -2 h^0 \left(\mathcal{X}_t, \mathcal O\left( \frac{K_{\mathcal{X}_t}} 2 \right)\right).
\end{equation}
\end{theorem}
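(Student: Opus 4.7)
My plan is to express both sides of \eqref{E:d=h} as identical counts of elements of the numerical semigroup $\langle p, q\rangle \subset \mathbb Z_{\geq 0}$, following the template of Section \ref{S:3.3} but in codimension one.

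First, I would compute the right-hand side. With $m = (pq-p-q-1)/2$, so that $\mathcal{O}(K_{\mathcal{X}_t}/2) \cong \mathcal{O}_{\mathbb P}(m)|_{\mathcal{X}_t}$ as in the Remark preceding the theorem, I would twist the defining short exact sequence $0 \to \mathcal{O}_{\mathbb P}(-pq) \to \mathcal{O}_{\mathbb P} \to \mathcal{O}_{\mathcal{X}_t} \to 0$ of the smooth divisor $\mathcal{X}_t \subset \mathbb P$ by $\mathcal{O}_{\mathbb P}(m)$. The exactness of the twisted sequence on the weighted projective plane $\mathbb P = \CP^2(q : p : 1)$ is handled by the $\pi^G_*$ argument used in Section \ref{S:3.3}. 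Since $\mathbb P$ is a finite cyclic quotient of $\CP^2$, the intermediate cohomology $H^1(\mathbb P, \mathcal{O}_{\mathbb P}(k))$ vanishes for every $k$ (cf.\ \cite{Canonaco}); combined with $H^0(\mathbb P, \mathcal{O}_{\mathbb P}(m-pq)) = 0$ for degree reasons (as $m - pq < 0$), this yields an isomorphism $H^0(\mathcal{X}_t, \mathcal{O}(K_{\mathcal{X}_t}/2)) \cong H^0(\mathbb P, \mathcal{O}_{\mathbb P}(m))$. The dimension of the latter equals the number of triples $(a,b,c) \in \mathbb Z_{\geq 0}^3$ with $qa + pb + c = m$, or equivalently the number of pairs $(a,b) \in \mathbb Z_{\geq 0}^2$ with $qa + pb \leq m$. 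Since $m < pq$, each semigroup element $\leq m$ has a unique representation of this form, so $h^0(\mathcal{X}_t, \mathcal{O}(K_{\mathcal{X}_t}/2))$ equals the number of elements of $\langle p, q\rangle$ at most $m$.

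Second, I would handle the left-hand side. Since $T_{p,q}$ is an L-space knot, Ozsv{\'a}th--Szab{\'o}'s large-surgery formula expresses $d(S^3_1(T_{p,q}))$ in terms of the knot $H$-function of $T_{p,q}$, which for torus knots admits an explicit description through the semigroup $\langle p, q\rangle$ (see e.g.\ Borodzik--N{\'e}methi). The upshot is a formula of the shape
\[
-\tfrac{1}{2}\, d(T_{p,q}) \;=\; \#\{\,g \in \mathbb Z_{\geq 0} \setminus \langle p, q\rangle \,:\, g > (pq - p - q)/2\,\}.
\]
Applying the classical symmetry of the symmetric numerical semigroup $\langle p, q\rangle$, which sends a gap $g$ to the semigroup element $(pq-p-q) - g$ and establishes a bijection between gaps and semigroup elements in $[0, pq-p-q]$, the right-hand side equals the number of semigroup elements of $\langle p, q\rangle$ of value at most $m$ (using that $pq - p - q$ is odd, as $\gcd(p,q) = 1$). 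This matches the count from the first step and finishes the proof.

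The main obstacle is the second step: isolating the precise combinatorial statement of Ozsv{\'a}th--Szab{\'o}'s surgery formula for $d(T_{p,q})$ and carefully reconciling the orientation convention $S^3_1(T_{p,q}) = -\Sigma(p,q,pq-1)$ together with the normalization of $d$. The cohomological first step is essentially a codimension-one analogue of the argument in Section \ref{S:3.3} and presents no new difficulties beyond the standard vanishing on weighted projective planes.
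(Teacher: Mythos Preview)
Your proposal is correct and follows essentially the same route as the paper's proof: both twist the defining sequence $0\to\mathcal O_{\PP}(-pq)\to\mathcal O_{\PP}\to\mathcal O_{\mathcal X_t}\to 0$ by $\mathcal O_{\PP}(m)$ with $m=(pq-p-q-1)/2$, use the vanishing of $H^1$ on the weighted projective plane to identify $h^0(\mathcal X_t,\mathcal O(K_{\mathcal X_t}/2))$ with a lattice-point count, and invoke the Borodzik--N\'emethi formula \cite{BN} for $d(T_{p,q})$ in terms of gaps of the semigroup $S_{p,q}=\langle p,q\rangle$. The only cosmetic difference is the final matching step: the paper passes through the spectrum $\Spec_f$ via the second Borodzik--N\'emethi identity and then uses the symmetry of $\Spec_f$, whereas you appeal directly to the symmetry $g\mapsto (pq-p-q)-g$ of the symmetric numerical semigroup; since the gaps of $\langle p,q\rangle$ and the points of $\Spec_f$ are in an evident bijection, these two symmetries are the same statement in different clothing.
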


\begin{proof}
Our proof will rely on two formulas of Borodzik and Nemethi \cite{BN}. One formula \cite[Corollary 7.3]{BN} states that
\medskip
\[
d(T_{p,q}) = -2 \left| \left\{ s \notin S_{p,q} ~\Big|~ s \geq \frac{(p-1)(q-1)}{2}  \right\} \right|
\]
for the semigroup
\[
S_{p,q} = \{ ap+bq \in \mathbb{Z} ~|~ a \geq 0,\; b \geq 0 \}.
\]
The other formula relates $S_{p,q}$ to the spectrum of the singularity of $f(x,y)$, 
\[
\Spec_f = \left\{\; \frac{i}{p}+\frac{j}{q} ~\Big|~ 0 < i < p,\; 0 < j < q\;\right\}.
\]
For any integer $0 \le \alpha < pq$, it claims that
\[
\Big| \{ s \notin S_{p,q} ~\big|~ s \geq \alpha \} \Big|\, =\, \left| \left\{ \left[1+ \frac{\alpha}{pq}, 2 \right) \cap \Spec_f \right\} \right|,
\]
see \cite[Proposition 7.1]{BN}. 

To calculate the right hand side of the formula \eqref{E:d=h}, we will follow an approach similar to that in Section \ref{S:3.3}. Consider the short exact sequence
\[
 0 \longrightarrow \mathcal O_{\mathbb P}(-pq) \longrightarrow \mathcal O_{\mathbb P} \longrightarrow \mathcal O_{\mathcal{X}_t} \longrightarrow 0. 
 \]
Note that $\mathcal O_{\mathbb P}(-pq)$ is locally free and $\mathcal {O}_{\mathbb P}\left((pq - p - q - 1)/2 \right)$ is locally free at $\mathcal{X}_t$; see Mori \cite{Mori} for details. Tensor the above short exact sequence with $\mathcal {O}_{\mathbb P}\left((pq - p - q - 1)/2\right)$ to obtain the short exact sequence
\begin{multline*}
0 \to \mathcal O_{\mathbb P}\left( \frac{-pq - p - q - 1} 2 \right) \to \mathcal O_{\mathbb P}\left( \frac{pq - p - q - 1} 2 \right) \to \mathcal O_{\mathcal{X}_t}\left( \frac{pq - p - q - 1} 2 \right) \to 0. 
\end{multline*}

\smallskip\noindent
It follows from Dolgachev \cite{Dolgachev} that 
\smallskip
\[
H^0\left(\mathbb{P},\,\mathcal O_{\mathbb P}\left( \frac{-pq - p - q - 1} 2 \right)\right) = H^1\left(\mathbb{P},\,\mathcal O_{\mathbb P}\left( \frac{-pq - p - q - 1} 2 \right)\right)=0
\]
and 
\smallskip
\[
\; H^1\left(\mathbb{P},\,\mathcal O_{\mathbb P}\left(\, \frac{pq - p - q - 1} 2 \right)\right)\; =\; H^2\left(\mathbb{P},\,\mathcal O_{\mathbb P}\left(\, \frac{pq - p - q - 1} 2 \right)\right)\, =\, 0.
\]

\smallskip\noindent
Therefore,
\begin{multline*}
H^0\left( \mathcal{X}_t, \mathcal O_{\mathcal{X}_t}\left( \frac{K_{\mathcal{X}_t}} 2 \right)\right) = 
H^0\left( \mathbb P, \mathcal O_{\mathbb P}\left( \frac{pq - p - q - 1} 2 \right)\right) \\ =\; \mathbb C\, \left\langle x^a y^b z^c ~ \Big| ~ qa+pb+c = \frac{pq-p-q-1} 2 \right\rangle,
\end{multline*}
from which we conclude that 
\[
h^0\left( \mathcal{X}_t, \mathcal O_{\mathcal{X}_t}\left( \frac{K_{\mathcal{X}_t}} 2 \right)\right)
\]
equals the number of integer lattice points $(a,b,c) \in \mathbb Z^3$ such that $a \ge 0$, $b \ge 0$, $c \ge 0$ and
\[
q a + p b + c\, =\, \frac{pq-p-q-1} 2.
\]
In turn, this equals the number of integer lattice points $(a,b) \in \mathbb Z^2$ such that $a \ge 0$, $b \ge 0$ and 
\[
 \frac{a+1}{p} +\, \frac{b+1}{q}\, \leq\, \frac{pq+p+q-1}{2pq}.
\]

\smallskip\noindent
Using the aforementioned Borodzik--Nemethi formulas and the symmetry of $\Spec_f$, we obtain
\begin{align*}
d(T_{p,q}) & = -2 \left| \left\{ s \notin S_{p,q} ~\Big|~ s \geq \frac{(p-1)(q-1)}{2}  \right\} \right| \\
& = -2 \left| \left\{ \left[1+ \frac{(p-1)(q-1)}{2pq}, 2 \right) \cap \Spec_f \right\} \right| \\
& = -2 \left| \left\{ \left( 0, \frac{pq+p+q-1}{2pq} \right] \cap \Spec_f \right\}\right| 
\; =\; -2 h^0 \left(\mathcal{X}_t, \mathcal O\left( \frac{K_{\mathcal{X}_t}} 2 \right)\right).
\end{align*}
\end{proof}

\begin{remark}
A modulo 2 version of the formula of Theorem \ref{T:d}, namely,
\smallskip
\begin{equation}\label{E:d=h mod 2}
- \frac 1 2\; d(T_{p,q})\; =\; h^0 \left(\mathcal{X}_t, \mathcal O\left( \frac{K_{\mathcal{X}_t}} 2 \right)\right) \pmod 2,
\end{equation}
can be derived more directly from the results already in the literature. According to \cite{OS}, 
 \[
 -\frac 1 2\; d(T_{p,q})\; = \;\sum_{j \ge 1}\; j a_j,
 \]
 where $a_j$ are the coefficients of the Alexander polynomial 
 \[
 \Delta (t)\; = \; a_0 + \sum_{j\ge 1}\; a_j (t^j + t^{-j})
 \]
of the knot $T_{p,q}$ normalized so that $\Delta (1) = 1$ and $\Delta(t^{-1}) = \Delta(t)$. A direct calculation then shows that 
 \[
 -\frac 1 2\; d(T_{p,q})\; = \;\sum_{j \ge 1}\; j a_j\; = \; \frac 1 2\; \Delta'' (1)\pmod 2,
 \]
 which is known to equal the Robertello invariant, or the Arf invariant, of the knot $T_{p,q}$. 
 
On the other hand, according to \cite[Proposition 4.2]{Libgober}, the Robertello invariant of $T_{p,q}$ is the difference of the Arf invariants of the $\ZZ/2$ quadratic forms corresponding to the theta characteristics (or spin structures) $L = (K_{\mathbb P}+{\X}_t)/2$ and $(K_{\mathbb P}+{\X}_0)/2$ on ${\X}_t$ and ${\X}_0$, respectively (since $\X_t$ contains no singularities of $\mathbb P$, the result of \cite{Libgober} applies even though $\mathbb P$ is singular). The claim now follows because $H^1 ({\X}_0,\ZZ/2) = 0$ and the Robertello invariant of the quadratic form corresponding to the theta characteristic $L$ equals 
\[
h^0 \left(\mathcal{X}_t, \mathcal O\left( \frac{K_{\mathcal{X}_t}} 2 \right)\right);
\]
see \cite[Proposition 2.1]{Libgober}.
\end{remark}

\begin{remark} 
One can use Riemann's singularity theorem \cite[Chapter 6, Section 1]{ACGH} to interpret the right-hand side of the relation \eqref{E:d=h mod 2} as the multiplicity of the theta divisor of the Jacobian of ${\mathcal X}_t$  at the point corresponding to (the translate of) canonical theta characteristic used in Theorem \ref{T:d}. This is a special case of the relation \cite{Libgober} between the mod 2 Seifert form of a link of a plane curve singularity and the canonical theta characteristic on the Steenbrink Jacobian of the Milnor fiber of singularity. One may wonder is this observation, together with the above results, extends to other links of singularities.
\end{remark}

%%%%%%%%%%%%%%%%%%%%%%%%%%%%%%%%%%%%%%%%%%%%%%%%%

\section{A formula for the $\bar\mu$--invariant}\label{S:mubar}
Let $X$ be the compactification of the Milnor fiber of the complete intersection \eqref{E:hamm} in the weighted projective space and let $K$ be the canonical divisor of $X$. In this section, we will prove the formula
\[
b_3 +  b_7\; = \;\chi \left(X, \mathcal O\left(\left\lceil {K}/2 \right\rceil\right) \right)\; =\;\chi \left(X, \mathcal O \left(\left\lfloor {K}/2 \right\rfloor\right) \right)
\]
of Theorem \ref{T:bb}. As in the proof of Theorem \ref{T:euler}, we will first compute $\chi \left(X, \mathcal O\left(\left\lceil {K}/2 \right\rceil\right) \right)$ using the orbifold version of the Atiyah--Singer index theorem due to Kawasaki \cite{Kawasaki} and then compare it with $b_3 + b_7$. 

Our task is significantly simplified by the fact that the necessary index calculations have already been done by Fukumoto and Furuta \cite[Section 5]{FF}. Their formulas easily imply that $\chi \left(X, \mathcal O\left(\left\lceil {K}/2 \right\rceil\right) \right) = \chi \left(X, \mathcal O \left(\left\lfloor {K}/2 \right\rfloor\right) \right)$ and reduce the proof to checking that $\bar\mu (\Sigma(a_1,\ldots,a_n))$ is given by the formula of \cite[Theorem 12]{FF} for the appropriate choice of the orbifold line bundle. In the case of even $a_1\cdots a_n$ this was done by Fukumoto, Furuta and Ue \cite[Theorem 2]{FFU}, see also Saveliev \cite[Theorem 1]{Saveliev3}, hence we only need to address the case of odd $a_1\cdots a_n$. After accounting for varying orientation conventions, we wish to verify the formula
\[
\begin{aligned}
\bar\mu(\Sigma(a_1,\ldots,a_n)) = &\; \frac 1 {8a_1\cdots a_n}  - \frac1 8 +\; \frac1 8\; \sum_{n=1}^n\; 
\frac 1{a_i}\; \sum_{\ell=1}^{a_i-1}\;\cot\left(\frac{\pi\ell}{a_i}\right)\cot\left(\frac{\pi b_i \ell}{a_i}\right) 
\\[7pt] 
+ \;\frac1 4\;  \sum_{n=1}^n\; &
\frac 1 {a_i}\;\sum_{\ell = 1}^{a_i - 1}\; \cos \left(\frac {\pi (1 + b_i + 2m b_i)\ell}{a_i}\right)\csc\left(\frac{\pi \ell}{a_i}\right)\csc\left(\frac{\pi b_i \ell}{a_i}\right),
\end{aligned}
\]
where 
\[
m\, =\, -\frac {a_1\cdots a_n} 2\cdot  \left(-2\, +\, \sum_{i=1}^n\; \left(1 - \frac 1{a_i}\right)\right)\quad\text{and}\quad
a_1\cdots a_n\cdot \sum_{i=1}^n\; \frac {b_i}{a_i}\, =\, -1.
\]
After checking that $(1 + 2m b_i)/a_i$ is an odd integer for all $i$ we reduce the formula to
\[
\begin{aligned}
\bar\mu(\Sigma(a_1,\ldots,a_n)) = \; \frac 1 {8a_1\cdots a_n}  - \frac1 8 + \; \frac1 8\; & \sum_{i=1}^n\; 
\frac 1{a_i}\; \sum_{\ell=1}^{a_i-1}\;\cot\left(\frac{\pi\ell}{a_i}\right)\cot\left(\frac{\pi b_i \ell}{a_i}\right) 
\\[7pt] 
+ \;\frac1 4\;  \sum_{i=1}^n\; &
\frac 1 {a_i}\;\sum_{\ell = 1}^{a_i - 1}\; (-1)^{\ell}\csc\left(\frac{\pi \ell}{a_i}\right)\cot\left(\frac{\pi b_i \ell}{a_i}\right).
\end{aligned}
\]

\smallskip\noindent We will proceed by identifying this formula with the known formula for the $\bar\mu$--invariant,
\[
\begin{aligned}
\bar\mu(\Sigma(a_1,\ldots,a_n))\; =\; 
\frac 1 {8 a_1\cdots a_n} - \frac 1 8  - 
\frac 1 2\; & \sum_{i=1}^n\; s(a_1\cdots a_n/a_i, a_i) \\ 
-\, & \sum_{i=1}^n\; s(a_1\ldots a_n/a_i,a_i; 1/2,1/2),
\end{aligned}
\]
see \cite[Formula 4]{RS}, which is based of a calculation by Nicolaescu \cite{Nicolaescu}. All we need to do is match trigonometric sums in the former formula with the respective Dedekind--Rademacher sums in the latter. 

Recall that the Dedekind--Rademacher sums $s(q,p;x,y)$ are defined for pairwise coprime integers $p, q > 0$ and arbitrary real numbers $x$ and $y$ by the formula
\[
s(q,p;x,y)\;= \sum_{h\mmod p} \; \lt \frac {h + y} p \rt \lt \frac {q(h + y)} p + x \rt,
\]
where, for any real number $r$, we set
\[
(\hspace{-0.02in}( r )\hspace{-0.02in}) = 
\begin{cases}
\; 0, &\text{if $r \in \mathbb Z$}, \\
r - \lfloor r \rfloor - 1/2, &\text{if $r \notin \mathbb Z$}.
\end{cases}
\]
When both $x$ and $y$ are integers, we get back the classical Dedekind sum 
\[
s(q,p)\;= \sum_{h\mmod p} \; \lt \frac h p \rt \lt \frac {qh} p \rt.
\]

\begin{lemma}
Let $h$, $p$ be coprime integers such that $p \ge 1$ is odd and let $\zeta = e^{2\pi i/p}$. Then 
\begin{equation}\label{E:rg1}
\lt \frac h p \rt \; =\; \frac i {2p}\;\sum_{n=1}^{p-1}\; \cot\left(\frac {\pi n} p\right) \zeta^{hn}\quad\text{and}
\end{equation}
\begin{equation}\label{E:rg2}
\lt \frac h p + \frac 1 2 \rt \, =\, \frac i {2p}\;\sum_{n=1}^{p-1}\; (-1)^n \csc\left(\frac {\pi n} p\right) \zeta^{hn}.
\end{equation}
\end{lemma}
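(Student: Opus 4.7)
The plan is to view both formulas as finite Fourier expansions on $\mathbb Z/p$. Since $p$ is odd and $\gcd(h,p)=1$, the values $h/p$ and $h/p + 1/2$ are never half-integers, so the left-hand sides define real, odd functions of $h \bmod p$, and the right-hand sides are precisely their discrete Fourier expansions. Thus it suffices, in each case, to compute the Fourier coefficients $c_n = p^{-1}\sum_{h=0}^{p-1} f(h)\zeta^{-hn}$ and verify that they equal $i\cot(\pi n/p)/(2p)$ and $(-1)^n i \csc(\pi n/p)/(2p)$ respectively. Both sets of coefficients are automatically purely imaginary in $n$, consistent with $f$ being real and odd.

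For \eqref{E:rg1}, I would substitute $\lt h/p\rt = h/p - 1/2$ for $1 \le h \le p-1$ and split $p c_n$ into two pieces. The $-1/2$ contribution reduces to $\sum_{h=1}^{p-1} \zeta^{-hn} = -1$. For the $h/p$ contribution I would use the standard identity $\sum_{h=1}^{p-1} h w^h = p/(w-1)$ valid whenever $w^p = 1 \ne w$, obtained by differentiating the geometric series and simplifying with $w^p = 1$. Specializing at $w = \zeta^{-n}$ yields $c_n = (w+1)/(2p(w-1))$, and writing $w = e^{2i\theta}$ with $\theta = -\pi n/p$ converts $(w+1)/(w-1)$ into $-i\cot\theta = i\cot(\pi n/p)$, giving the claim.

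For \eqref{E:rg2}, the cleanest route is to combine \eqref{E:rg1} with the elementary identity $\lt x + 1/2\rt = \lt 2x\rt - \lt x\rt$, valid whenever $x$ is not a half-integer (which holds for $x = h/p$ under our hypotheses). Applying \eqref{E:rg1} to $\lt 2h/p\rt$ produces a sum in $\zeta^{2hn}$; reindexing by $m \equiv 2n \pmod p$, with inverse $n \equiv sm \pmod p$ for $s = (p+1)/2$, rewrites it as a sum in $\zeta^{hm}$. A short calculation shows that $sm \bmod p$ equals $m/2$ when $m$ is even and $(m+p)/2$ when $m$ is odd, so the argument of the cotangent becomes $\pi m/(2p)$ or $\pi m/(2p) + \pi/2$, producing $\cot(\pi m/(2p))$ or $-\tan(\pi m/(2p))$ respectively. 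Subtracting the expansion of $\lt h/p\rt$ from \eqref{E:rg1} and applying the half-angle identities $\cot(\theta/2) - \cot\theta = \csc\theta$ and $\csc\theta - \cot\theta = \tan(\theta/2)$ consolidates both parity cases into a single coefficient $(-1)^m \csc(\pi m/p)/(2p)$, yielding \eqref{E:rg2}.

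The main obstacle will be the bookkeeping for the parity-dependent sign $(-1)^m$, which enters through $\zeta^{(p+1)m/2} = (-1)^m e^{i\pi m/p}$ when inverting $2$ modulo the odd integer $p$; this sign is the sole substantive feature distinguishing \eqref{E:rg2} from \eqref{E:rg1}, and once it is correctly tracked the rest is routine half-angle trigonometry and geometric-series manipulation. (As a sanity check, a direct DFT computation of $\lt h/p + 1/2\rt$, splitting the range at $h = (p\pm 1)/2$ where the integer part jumps, produces the same $(-1)^m$ factor from the partial geometric sum $\sum_{h=0}^{(p-1)/2}\zeta^{hn}$, confirming that this sign is genuinely intrinsic.)
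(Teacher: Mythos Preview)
Your argument is correct. For \eqref{E:rg1} you are doing exactly what the paper does by citing Rademacher--Grosswald: a direct discrete Fourier transform using the differentiated geometric series identity $\sum_{h=1}^{p-1} h w^h = p/(w-1)$ for $w^p = 1 \ne w$.

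For \eqref{E:rg2} your route differs from the paper's. The paper computes the Fourier coefficients of $h \mapsto \lt h/p + 1/2\rt$ directly, splitting the range of summation at $h = (p\pm 1)/2$ where the integer part jumps, and then manipulates two partial geometric sums to arrive at $a_n = -p^{-1}\zeta^{-n(p+1)/2}/(1 - \zeta^{-n})$; the factor $\zeta^{-n(p+1)/2}$ is what produces the sign $(-1)^n$. You instead invoke the sawtooth identity $\lt x + 1/2\rt = \lt 2x\rt - \lt x\rt$, reuse \eqref{E:rg1} for $\lt 2h/p\rt$, and then reindex via the inverse of $2$ modulo $p$, namely $s = (p+1)/2$; the sign $(-1)^m$ enters through $sm \bmod p$ equalling $m/2$ or $(m+p)/2$ according to parity, and the half-angle identities $\cot(\theta/2) - \cot\theta = \csc\theta$ and $\cot\theta - \csc\theta = -\tan(\theta/2)$ finish the job. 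Your approach is a bit more structural, leveraging \eqref{E:rg1} rather than redoing the DFT from scratch, at the cost of the reindexing bookkeeping; the paper's is more self-contained and mirrors the Rademacher--Grosswald treatment of \eqref{E:rg1}. The parenthetical sanity check you give at the end is in fact precisely the paper's proof.
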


\begin{proof}
The first formula is proved on pages 13--14 of \cite{RG}, and we will prove the second by a slight modification of that argument. The function in question has period $p$ and thus can be expressed as
\[
\lt \frac h p + \frac 1 2 \rt \; =\; \sum_{m \mmod p}\; a_m \zeta^{mh}
\]
with certain Fourier coefficients $a_m$ which can be found from the formula
\[
\sum_{h \mmod p}\; \lt \frac h p + \frac 1 2\rt \zeta^{-hn}\;=\;\sum_{m \mmod p}\; a_m\sum_{h \mmod p}\;\zeta^{(m-n)h}\;=\; p a_n.
\]
According to Lemma 1 on page 4 of \cite{RG},
\[
\sum_{h \mmod p}\; \lt \frac {h+x} p \rt\, =\,(\hspace{-0.02in}( x )\hspace{-0.02in}),
\]
hence for $n = 0$ we have 
\[
a_0 = \frac 1 p \sum_{h \mmod p}\; \lt \frac h p + \frac 1 2 \rt\, =\, \frac 1 p \sum_{h \mmod p}\; \lt \frac {h + p/2} p \rt\, =\, \frac 1 p\, \left(\hspace{-0.04in}\left( \frac p 2 \right)\hspace{-0.04in}\right) = 0.
\]
On the other hand, if $n \ne 0$, we obtain
\begin{multline*}
a_n\, =\, \frac 1 p\; \sum_{h=1}^{p-1}\; \lt \frac h p + \frac 1 2 \rt \zeta^{-hn} 
\,=\, \frac 1 {p^2} \sum_{h=1}^{(p-1)/2}\; h\zeta^{-hn}\, + \\
\frac 1 {p^2} \sum_{h=(p+1)/2}^{p-1}\; (h - p)\zeta^{-hn} 
\,=\, \frac 1 {p^2}\; \sum_{h=1}^{p-1}\; h\zeta^{-hn}
- \frac 1 p \sum_{h=(p+1)/2}^{p-1}\; \zeta^{-hn}.
 \end{multline*}
A straightforward calculation with the geometric series shows that 
\[
\sum_{h=1}^{p-1}\; h\zeta^{-hn} = - \frac p {1 - \zeta^{-n}}\quad\text{and}\;
\sum_{h=(p+1)/2}^{p-1}\; \zeta^{-hn} = \frac {\zeta^{-n(p+1)/2} - 1}{1 - \zeta^{-n}}
\]
Therefore,
\[
a_n = -\frac 1 p\cdot \frac {\zeta^{-n(p+1)/2}}{1 - \zeta^{-n}}
\]
and
\begin{multline*}
\lt \frac h p + \frac 1 2 \rt \; =\;-\frac 1 p\; \sum_{n=1}^{p-1}\; \frac {\zeta^{-n(p+1)/2}}{1 - \zeta^{-n}} \, \zeta^{hn}  \\
= -\frac 1 p\; \sum_{n=1}^{p-1}\; \frac {\zeta^{-np/2}}{\zeta^{\,n/2} - \zeta^{-n/2}} \, \zeta^{hn} 
\; =\; \frac i {2p}\,\sum_{n=1}^{p-1}\; (-1)^n \csc\left(\frac {\pi n} p\right) \zeta^{hn}.
\end{multline*}
\end{proof}

\begin{proposition}
For any $i = 1,\ldots,n$, 
\[
-\frac 1 2\, s(a_1\cdots a_n/a_i,a_i)\; =\; \frac1 {8 a_i}\; \sum_{\ell=1}^{a_i-1}\;\cot\left(\frac{\pi\ell}{a_i}\right)\cot\left(\frac{\pi b_i \ell}{a_i}\right)
\]
\end{proposition}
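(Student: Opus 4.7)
The strategy is to expand the Dedekind sum using the Fourier formula \eqref{E:rg1}, collapse one of the two sums via the orthogonality of characters modulo $a_i$, and then relate the resulting cotangent sum to the one on the right-hand side by a substitution that uses the arithmetic identity $q b_i \equiv -1 \pmod{a_i}$, where $q = a_1\cdots a_n/a_i$.

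\textbf{Step 1: arithmetic preparation.} Starting from the defining relation
\[
a_1 \cdots a_n \cdot \sum_{j=1}^n \frac{b_j}{a_j}\;=\;-1,
\]
I rewrite it as $\sum_{j=1}^n (a_1\cdots a_n / a_j)\, b_j = -1$. For every $j \ne i$, the integer $a_1\cdots a_n/a_j$ is divisible by $a_i$, hence reducing modulo $a_i$ yields the key congruence $q\, b_i \equiv -1 \pmod{a_i}$, i.e.\ $b_i$ and $-q$ are inverses modulo $a_i$.

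\textbf{Step 2: Fourier expansion of the sawtooth.} Let $\zeta = e^{2\pi i /a_i}$. By the classical definition,
\[
s(q,a_i)\;=\;\sum_{h\,\mathrm{mod}\, a_i}\;\lt \frac{h}{a_i}\rt \lt \frac{qh}{a_i}\rt.
\]
I substitute formula \eqref{E:rg1} (applicable since $a_i$ is a coprime factor of the odd integer $a_1\cdots a_n$) into both sawtooth factors, obtaining
\[
s(q,a_i)\;=\;-\frac{1}{4a_i^2}\sum_{m,\ell=1}^{a_i-1}\cot\!\left(\frac{\pi m}{a_i}\right)\cot\!\left(\frac{\pi \ell}{a_i}\right)\,\sum_{h\,\mathrm{mod}\,a_i}\zeta^{h(m+q\ell)}.
\]

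\textbf{Step 3: orthogonality and $\pi$-periodicity.} The inner character sum equals $a_i$ when $m + q\ell \equiv 0 \pmod{a_i}$ and vanishes otherwise. So the double sum collapses: for each $\ell$ there is a unique $m(\ell)\in\{1,\dots,a_i-1\}$ with $m(\ell)\equiv -q\ell \pmod{a_i}$. Since the cotangent has period $\pi$ and is odd, $\cot(\pi m(\ell)/a_i) = -\cot(\pi q\ell/a_i)$, so
\[
s(q,a_i)\;=\;\frac{1}{4a_i}\sum_{\ell=1}^{a_i-1}\cot\!\left(\frac{\pi q\ell}{a_i}\right)\cot\!\left(\frac{\pi \ell}{a_i}\right).
\]

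\textbf{Step 4: change of summation variable.} I reindex by $\ell \mapsto b_i \ell$; since $\gcd(b_i,a_i)=1$ this is a bijection on $\{1,\dots,a_i-1\}$. Using Step~1, $q(b_i\ell)\equiv -\ell \pmod{a_i}$, so by periodicity and oddness
\[
\sum_{\ell=1}^{a_i-1}\cot\!\left(\frac{\pi q\ell}{a_i}\right)\cot\!\left(\frac{\pi \ell}{a_i}\right)
\;=\;
-\sum_{\ell=1}^{a_i-1}\cot\!\left(\frac{\pi \ell}{a_i}\right)\cot\!\left(\frac{\pi b_i\ell}{a_i}\right).
\]
Multiplying by $-\tfrac{1}{2}$ yields the claimed identity.

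The computation is essentially mechanical once Step~1 is in place; the main point to get right is the bookkeeping between congruences mod $a_i$ and actual equalities of cotangent values (keeping track of signs coming from the oddness of $\cot$), and ensuring that the reindexing in Step~4 indeed transforms $\cot(\pi q\ell /a_i)$ into $-\cot(\pi \ell/a_i)$.
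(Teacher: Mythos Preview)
Your proof is correct and follows essentially the same approach as the paper. The paper's two-line proof cites \cite[Lemma 1]{RS} for the identity $-s(a_1\cdots a_n/a_i,a_i)=s(b_i,a_i)$ and then \cite[formula (26)]{RG} for the cotangent representation of $s(b_i,a_i)$; your Steps~1--4 simply unpack both citations (Step~1 is exactly the congruence $qb_i\equiv -1\pmod{a_i}$ underlying the first identity, and Steps~2--3 derive the cotangent formula from \eqref{E:rg1}), with the only cosmetic difference being that you apply the cotangent formula to $s(q,a_i)$ first and then reindex, rather than first passing to $s(b_i,a_i)$.
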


\begin{proof}
It follows from \cite[Lemma 1]{RS} that $- s(a_1\cdots a_n/a_i,a_i) = s(b_i,a_i)$. The result now follows from formula (26) of \cite{RG}, which is an easy consequence of \eqref{E:rg1}.
\end{proof}

\begin{proposition}
For any $i = 1,\ldots,n$, 
\[
- s(a_1\cdots a_n/a_i,a_i;1/2,1/2)\; =\; \frac1 {4a_i}\;\sum_{\ell = 1}^{a_i - 1}\; (-1)^{\ell}\csc\left(\frac{\pi \ell}{a_i}\right)\cot\left(\frac{\pi b_i \ell}{a_i}\right).
\]
\end{proposition}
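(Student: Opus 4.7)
The plan is to mirror the strategy of the previous proposition: apply the two Fourier expansions \eqref{E:rg1} and \eqref{E:rg2} to the two sawtooth factors in $s(a_1\cdots a_n/a_i,a_i;1/2,1/2)$, collapse the resulting double sum to a single trigonometric sum via orthogonality of characters, and then match the outcome to the target formula by a change of summation index.

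The first step is to simplify the shifts in the sawtooth functions. Set $p = a_i$ and $q = a_1\cdots a_n/a_i$. Because $a_1\cdots a_n$ is odd in the case under consideration, both $p$ and $q$ are odd, so $(p-1)/2$ and $(q+1)/2$ are integers. Translating the summation variable $h \mapsto h+(p-1)/2$ turns the two shifts $1/(2p)$ and $q/(2p)+1/2$ into $1/2$ and $(q+1)/2$ respectively, and absorbing the integer part of the second gives the clean form
\[
s(q,p;1/2,1/2)\;=\;\sum_{h\,\mmod p}\;\lt \frac{h}{p}+\frac{1}{2}\rt\;\lt \frac{qh}{p}\rt.
\]
Expanding the first factor via \eqref{E:rg2} and the second via \eqref{E:rg1}, multiplying out, and exchanging the sum over $h$ with the Fourier double sum, orthogonality forces $n+qk\equiv 0\pmod{p}$ and reduces the expression to a single sum over $k\in\{1,\ldots,p-1\}$ with $n\equiv -qk\pmod{p}$. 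Simplifying $(-1)^n\csc(\pi n/p)$ to $-(-1)^k\csc(\pi qk/p)$---a short calculation that uses that $p$ and $q$ are both odd---yields
\[
s(q,p;1/2,1/2)\;=\;\frac{1}{4p}\,\sum_{k=1}^{p-1}(-1)^k\cot\left(\frac{\pi k}{p}\right)\csc\left(\frac{\pi qk}{p}\right).
\]

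The last step is the substitution $k\equiv -b_i\ell\pmod{p}$, legitimized by $qb_i\equiv -1\pmod{p}$, a direct consequence of $a_1\cdots a_n\cdot\sum_j b_j/a_j=-1$. Under this substitution $\cot(\pi k/p)$ picks up a sign by the $\pi$-periodicity and oddness of $\cot$, while $\csc(\pi qk/p)$ picks up a sign $(-1)^N$ with $N$ determined by $kq=\ell+Np$. The main obstacle is to show that all of these signs collapse to a single factor of $-(-1)^\ell$. This requires a short parity check: writing $qb_i=-1+Mp$, the relation modulo $2$ (using $p$ and $q$ odd) gives $M\equiv b_i+1\pmod{2}$, so that $b_i+M$ is odd. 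With this in hand, the various sign factors combine to give
\[
(-1)^k\cot\left(\frac{\pi k}{p}\right)\csc\left(\frac{\pi qk}{p}\right)\;=\;-(-1)^\ell\csc\left(\frac{\pi\ell}{p}\right)\cot\left(\frac{\pi b_i\ell}{p}\right),
\]
and summing over $\ell$ and negating yields the claimed identity. Apart from this sign bookkeeping---which is precisely where the oddness hypothesis enters---everything is formal Fourier analysis parallel to the previous proposition.
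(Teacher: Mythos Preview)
Your argument is correct, and the overall mechanism---Fourier expansion via \eqref{E:rg1}, \eqref{E:rg2} followed by orthogonality of characters---is the same as the paper's. The difference lies in the order of operations. The paper first invokes Lemma~3 of \cite{RS} to rewrite $-s(a_1\cdots a_n/a_i,a_i;1/2,1/2)$ as $s(b_i,a_i;1/2,0)=\sum_h \lt h/a_i\rt\lt hb_i/a_i+1/2\rt$, so the passage from $q=a_1\cdots a_n/a_i$ to $b_i$ takes place at the level of Dedekind--Rademacher sums; the Fourier expansion then collapses immediately to the stated formula with no further substitution needed. You instead perform the translation $h\mapsto h+(p-1)/2$ to strip the shifts, Fourier-expand in the $q$-variable, and only afterwards pass to $b_i$ via $k\equiv -b_i\ell\pmod{a_i}$, which forces the careful parity bookkeeping using $qb_i=-1+Mp$. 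Your route is self-contained (no appeal to the external lemma from \cite{RS}), at the cost of the sign analysis; the paper's route is shorter because the $q\to b_i$ conversion is packaged in the cited lemma and the resulting single sum requires no further change of index.
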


\begin{proof}
Lemma 3 of \cite{RS} implies that $- s(a_1\cdots a_n/a_i,a_i) = s(b_i,a_i; 1/2, 0)$. The result now follows by a straightforward calculation using formulas \eqref{E:rg1} and \eqref{E:rg2}:
\begin{align*}
s(b_i,a_i; 1/2, 0) & = \sum_{h\mmod a_i} \; \lt \frac h {a_i} \rt \lt \frac {h b_i} {a_i} + \frac 1 2 \rt \\
& = - \frac 1 {4a_i^2}\; \sum_{m, n=1}^{a_i-1}\;(-1)^m \cot\left(\frac {\pi n}{a_i}\right)\csc\left(\frac {\pi m}{a_i}\right) \sum_{h \mmod a_i} \zeta^{h(n+m b_i)}.
\end{align*}

\smallskip\noindent
The sum over $h \mmod a_i$ vanishes, unless $n + m b_i = 0 \mmod a_i$, when it has value $a_i$, which gives the desired formula.
\end{proof}

%%%%%%%%%%%%%%%%%%%%%%%%%%%%%%%%%%%%%%%%%%%%%%%%%%%%

\end{document}